\theoremstyle{plain}
\newtheorem{lemma}{Lemma}
\newtheorem{definition}{Definition}
\newtheorem{corollary}{Corollary}
\newtheorem{theorem}{Theorem}
\newtheorem{conjecture}{Conjecture}
\newtheoremstyle{derp}
{3pt}
{3pt}
{}
{}
{\upshape}
{:}
{.5em}
{}
\theoremstyle{derp}
\newcommand{\Z}{\mathbb{Z}}
\newcommand{\N}{\mathbb{N}}
\newcommand{\ID}{\mathrm{id}}
\newcommand{\Homeo}{\mathrm{Homeo}}
\newcommand{\End}{\mathrm{End}}
\newcommand{\Aut}{\mathrm{Aut}}
\newcommand{\llb}{\llbracket}
\newcommand{\rrb}{\rrbracket}
\title{Graph and wreath products in topological full groups of full shifts}
\author{
Ville Salo \\
vosalo@utu.fi
}
\begin{document}
\maketitle

\begin{abstract}
We prove that the topological full group $\llb X \rrb$ of a two-sided full shift $X = \Sigma^\Z$ contains every right-angled Artin group (also called a graph group). More generally, we show that the family of subgroups with ``linear look-ahead'' is closed under graph products. We show that the lamplighter group $\Z_2 \wr \Z$ embeds in $\llb X \rrb$, and conjecture that it does not embed in $\llb X \rrb$ with linear look-ahead. Generalizing the lamplighter group, we show that whenever $G$ acts with ``unique moves'' (or at least ``move-$A$ithfully''), we have $A \wr G \leq \llb X \rrb$ for finite abelian groups $A$. We show that free products of finite and cyclic groups act with unique moves. We show that $\Z^2$ does not admit move-$A$ithful actions, and conjecture that $\Z_2 \wr \Z^2$ does not embed in $\llb X \rrb$ at all. We show that topological full groups of all infinite nonwandering sofic shifts have the same subgroups, and that this set of groups is closed under commensurability. The group $\llb X \rrb$ embeds in the higher-dimensional Thompson group $2$V, so it follows that $2$V contains all RAAGs, refuting a conjecture of Belk, Bleak and Matucci.
\end{abstract}

\section{Introduction}

We prove that all right-angled Artin groups (RAAGs) embed in the topological full group of the two-sided binary full shift. Write $X = \{0,1\}^\Z$ and write $\llb X \rrb$ for this group. We prove a weak closure property for graph products of subgroups of $\llb X \rrb$: for a subgroup of $\llb X \rrb$, we define its \emph{look-ahead} function, and prove that any graph product of groups with linear look-ahead (with possibly different slopes) also embeds in $\llb X \rrb$ with bounded look-ahead. Bounded (resp.\ linear) look-ahead means roughly that for each element $g$ of the group there is a configuration $x$ which $g$ decides to shift by at least $n \geq 1$ steps, after looking at only $n + C$ (resp.\ $Cn + C$) coordinates of $x$.

Embeddability of RAAGs, together with the closure of the class of embeddable subgroups under commensurability (which we also show), implies the embeddability of many other groups such as surface groups and Coxeter groups, see \cite{BeBlMa20}. The embeddability of $F_2 \times F_2$ in itself implies many things, in particular that there exist finitely-generated subgroups with undecidable conjugacy problem.

A key idea, conveyor belts \cite{Sa16a}, is borrowed from the theory of automorphism groups of subshifts. This is roughly equal to the bucket-passing of \cite{KiRo90}. This is simply a technique for ``drawing'' configurations of one subshift on the configurations of another, by a shift-invariant continuous rule, and then acting on the ``simulated'' configurations. Kim and Roush used this idea in \cite{KiRo90} to show that the automorphism groups of full shifts with different alphabets embed in each other. In the same paper they show that finite graph products of finite groups, thus also finitely-generated RAAGs, embed in $\Aut(\Sigma^\Z)$ for any alphabet $\Sigma$. We proved closure of the set of subgroups under free products and graph products using conveyor belts respectively in \cite{Sa16a,Sa20}. Much of the present paper is analogous to \cite{Sa20}, although the exact results and proof details differ.

The group $\llb X \rrb$ is interesting because it embeds in various groups that are defined by a word rewriting action. In particular it embeds in the Brin-Thompson group $2$V. It is known that Thompson's group V contains a copy of a RAAG $G$ if and only if $\Z^2 * \Z \not\leq G$ \cite{CoHa16}. In light of this, it was conjectured in \cite{BeBlMa20} that $\Z^d * \Z$ is the correct obstruction for embeddability in the Brin-Thompson group $(d-1)$V defined in \cite{Br04}. In \cite{BeBlMa20}, it was shown that at least RAAGs with $m$ nodes and $n$ non-commuting relations embed in $(m+n)$V. In \cite{Ka18}, it was proved that if $G$ is a RAAG with $\Z^d * \Z \not\leq G$, then indeed $G \leq (d-1)$V, as predicted by the conjecture. In this paper, we refute the conjecture from \cite{BeBlMa20} and completely settle the issue of RAAG embeddability: all (countable) RAAGs embed in $n$V for all $n \geq 2$.

The group $\llb X \rrb$ also embeds in the group of Turing machines $\mathrm{RTM}(2,1)$ from \cite{BaKaSa16}, and is isomorphic to the group of finite-state machines $\mathrm{RFA}(2,1)$ considered there. It also embeds in the group of cellular automata $\Aut(X \times X)$ (the group of shift-commuting homeomorphisms $f : X\times X \to X \times X$) by translating movements of the tape into permutations of the bits on the second component of the product subshift. It of course does not embed in Thompson's group V (for example because it contains strictly more RAAGs than V).

We do not know which groups embed in $\llb X \rrb$ with linear look-ahead. We prove that the lamplighter group $\Z_2 \wr \Z$ embeds in $\llb X \rrb$, and conjecture that it does not embed with linear look-ahead. We are far from showing this, indeed we are not even able to show that $\llb X \rrb$ does not abstractly embed in itself with linear look-ahead.

Our proof that $\Z_2 \wr \Z \leq \llb X \rrb$ is based on the fact that $\Z$ admits an action by $\llb X \rrb$ with unique moves, roughly meaning that in any finite set we can find an element with unique cocycle value on some configuration (different from the values of cocycles of all other elements in the set). We show that all countable free products of finite and cyclic groups admit such actions. A weaker condition for embedding $A \wr G$ is the $G$ admits a ``move-$A$ithful'' action, analogous to the more general notion of $A$ithfulness studied in \cite{Sa20}. We show that $\Z^2$ does not act with unique moves (or even move-$A$ithfully), and conjecture that $\Z_2 \wr \Z^2$ indeed does not embed in $\llb X \rrb$ at all. 

As a side-effect of our proof of closure of the set of subgroups of $\llb X \rrb$ under commensurability, we prove that this set of subgroups is the same if $X$ is replaced by any infinite nonwandering sofic shift.

Topological full groups are most commonly studied in the context of minimal subshifts. We note that no non-abelian RAAGs embed in the topological full group of a minimal subshift, since the latter groups are amenable \cite{JuMo13}. On the other hand our groups do not contain $\llb Y \rrb$ for minimal subshifts $Y$, since $\llb \Sigma^\Z \rrb$ is residually finite and $\llb Y \rrb$ is simple. The groups $\llb \Sigma^\Z \rrb$ also do not have finitely-generated commutator subgroups (for reasons that are not particularly deep).

Finally, we note that topological full groups of one-sided subshifts of finite type have been previously studied in \cite{Ma12}. They are more related to Thompson's V than our group $\llb X \rrb$; indeed $\llb \{0,1\}^\N \rrb$ is isomorphic to Thompson's V. These groups do not (always) embed in our group, as they can contain infinite simple groups, and our group does not embed in at least one of them (namely Thompson's V).

\section{Definitions}

An \emph{alphabet} is a finite set $\Sigma$ with at least two elements.
The \emph{full shift} is the set $\Sigma^\Z$ where $\Sigma$ is an alphabet.
The \emph{(left) shift} is the map $\sigma : \Sigma^\Z \to \Sigma^\Z$ defined by $\sigma(x)_i = x_{i+1}$, and it makes the full shift into a dynamical system. A \emph{shift} of a configuration $x \in \Sigma^\Z$ is $\sigma^i(x)$ for some $i \in \Z$.
A \emph{subshift} is a closed shift-invariant subset of a full shift.
A configuration $x \in X$ where $X$ is a subshift has \emph{period} $p \geq 1$ if $\sigma^p(x) = x$. (We do not need the concept of a least period.) A configuration is \emph{aperiodic} if it has no period. 
We denote two-sided configurations in a full shift by $x.y \in \Sigma^\Z$, where $x \in \Sigma^{(-\infty, -1]}, y \in \Sigma^\N$, so the coordinate immediately to the right of the decimal point is coordinate $0$.
We concatenate words $u,v$ by simply writing $uv$.
Write $[w]_i = \{x \in X \;|\; x_{[i, i+|w|-1]} = w \}$, where $X$ is a subshift clear from context (usually a full shift).
A set of words $W \subset A^*$ is \emph{mutually unbordered} if $ut, tv \in W \implies |t| = 0 \vee |u| = |v| = 0$. A word $w$ is \emph{unbordered} if $\{w\}$ is mutually unbordered.

\begin{definition}
The topological full group of a subshift $X$, which we denote by $\llb X \rrb$, is the group of all homeomorphisms $f : X \to X$ such that there exists a continuous function $c : X \to \Z$ such that $f(x) = \sigma^{c(x)}(x)$ for all $x \in X$.
\end{definition}

The function $c : X \to \Z$ in the definition of $\llb X \rrb$ is called the \emph{cocycle}. Usually one writes $\llb \sigma \rrb$, as the topological full group can be associated to any homeomorphism $\sigma$. However, the dynamics usually stays the same (shift) for us while the set changes, so we just write the set inside the brackets.

\begin{definition}
The Brin-Thompson $2V$ is, for $X = \{0,1\}^\Z$, the subgroup of $\Homeo(X)$ containing all maps $f : X \to X$ such that there exists $n \in \N$ and $c : \Sigma^{2n+1} \to \Sigma^* \times \Sigma^*$, such that for all $u \in \Sigma^n, v \in \Sigma^{n+1}$ with $c(u,v) = (u',v')$ we have $f(xu.vy) = xu'.v'y$.
\end{definition}

The way the definitions are stated, the following is obvious.

\begin{lemma}
$\llb \{0,1\}^\Z \rrb \leq 2\mathrm{V}$.
\end{lemma}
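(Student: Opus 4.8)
The plan is to take an arbitrary $f \in \llb X \rrb$ with cocycle $c : X \to \Z$, so that $f(z) = \sigma^{c(z)}(z)$ for every $z \in X$, and to rewrite it directly in the window form demanded by the definition of $2\mathrm{V}$. First I would exploit compactness: since $X = \{0,1\}^\Z$ is compact and $c$ is continuous into the discrete set $\Z$, its image is finite, say $c(X) \subseteq [-M,M]$, and $c$ depends on only finitely many coordinates, i.e.\ there is $N$ such that $c(z)$ is determined by $z_{[-N,N]}$. Setting $n = \max(N,M)$, I can write any configuration as $z = xu.vy$ with $u = z_{[-n,-1]} \in \Sigma^n$, $v = z_{[0,n]} \in \Sigma^{n+1}$, and $x, y$ the two infinite tails; then the displacement $k := c(z)$ is a function of the central window $uv = z_{[-n,n]}$ alone, and $|k| \le M \le n$.

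The key observation is that, on such a $z$, applying $\sigma^{k}$ has exactly the same effect as leaving the tails $x$ and $y$ fixed as sequences and sliding the decimal point $k$ places through the window (to the right if $k>0$). Accordingly I would define $c' : \Sigma^{2n+1} \to \Sigma^* \times \Sigma^*$ by $c'(u,v) = (u',v')$, where $u'v' = uv$ is the factorization with $|u'| = n + k$ and $|v'| = n+1-k$; both lengths are nonnegative precisely because $|k| \le n$, and they sum to $2n+1$, so $u'v'$ is just the window $uv$ re-bracketed at the shifted decimal point. A short coordinate check (splitting $\Z$ into the four regions occupied by $x$, $u'$, $v'$, $y$) then confirms $f(xu.vy) = xu'.v'y$: a shift only relocates the origin relative to an otherwise unchanged bi-infinite word, and since the displacement $|k|$ never exceeds $n$ it is absorbed entirely within the window, leaving both tails untouched.

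This presents $f$ in the form required by the definition of $2\mathrm{V}$, with parameter $n$ and rewriting rule $c'$; since $f$ is already a homeomorphism, it therefore belongs to $2\mathrm{V}$. As $\llb X \rrb$ is a subgroup of $\Homeo(X)$ all of whose elements land in $2\mathrm{V}$, I conclude $\llb X \rrb \le 2\mathrm{V}$. I expect the only point requiring any care to be the bookkeeping that the window is wide enough to contain the full displacement, i.e.\ the choice $n \ge M$: this is exactly what guarantees that $u'$ and $v'$ are genuine nonnegative-length words and that the tails are genuinely fixed, making the rewriting local. Everything else is immediate from the definitions, which is presumably why the statement is flagged as obvious.
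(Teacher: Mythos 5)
Your proof is correct and is exactly the argument the paper leaves implicit: the paper gives no written proof at all (it states that the lemma is obvious from the way the definitions are phrased), and your compactness argument --- finite range $[-M,M]$ and finite determining window $[-N,N]$ for the cocycle, then rebracketing the central window of radius $n=\max(N,M)$ at the shifted decimal point --- is precisely the intended unfolding of those definitions. The one point needing care, that $n\ge M$ makes $|u'|=n+k$ and $|v'|=n+1-k$ nonnegative so the displacement is absorbed inside the window, is handled correctly.
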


\begin{definition}
Let $X$ be a subshift and let $G \leq \llb X \rrb$ be a subgroup. We say $G$ has \emph{look-ahead} $\alpha : \N \to \N$ if $\alpha(0) = 0$ and for all $g \in G$ with cocycle $c$, there exists $n \geq 1$ and a cylinder $[w]_{-n}$ with $w \in \Sigma^{2n+1}$ such that $\forall x \in [w]_{-n}: c(x) = m$, with $|m| + \alpha(|m|) \geq n$. We say $G$ has \emph{plook-ahead} (short for periodic look-ahead) $\alpha : \N \to \N$ if for all $g \in G$ with cocycle $c$ there exists a periodic point $x$ with period at most $2n+1$ such that $|c(x)| + \alpha(|c(x)|) \geq n \geq 1$.
\end{definition}

Note that because $\alpha(0) = 0$, we have $c(x) \neq 0$ for the configurations $x \in [w]_{-n}$ in the definition of look-ahead, and similarly $x$ cannot be a fixed point in the definition of plook-ahead.

From now on let us restrict to full shifts $X = \Sigma^\Z$, unless indicated otherwise.

\begin{lemma}
\label{lem:lookaplooka}
If $G$ has look-ahead $\alpha$, then it has plook-ahead $\alpha$.
\end{lemma}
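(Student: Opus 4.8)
The plan is to feed the look-ahead witness directly into the definition of plook-ahead, exploiting the fact that on a full shift every finite word extends to a periodic configuration. Fix an arbitrary $g \in G$ with cocycle $c$. Since $G$ has look-ahead $\alpha$, I obtain some $n \geq 1$, a word $w \in \Sigma^{2n+1}$, and an integer $m$ such that $c$ is constant equal to $m$ on the cylinder $[w]_{-n}$, and $|m| + \alpha(|m|) \geq n$. Note that $[w]_{-n}$ pins down exactly the coordinates $-n, -n+1, \dots, n$, since $w$ has length $2n+1$.

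Next I construct the required periodic point by tiling $w$. Writing $w = w_0 w_1 \cdots w_{2n}$, define $x \in \Sigma^\Z$ by $x_i = w_{(i+n) \bmod (2n+1)}$. Then $\sigma^{2n+1}(x) = x$, so $x$ is periodic with period $2n+1$, which is at most $2n+1$ as the definition requires. Moreover $x_{[-n,n]} = w_0 w_1 \cdots w_{2n} = w$, so $x \in [w]_{-n}$.

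Finally, since $c$ is constant equal to $m$ on $[w]_{-n}$ and $x$ lies in that cylinder, $c(x) = m$. Hence $|c(x)| + \alpha(|c(x)|) = |m| + \alpha(|m|) \geq n \geq 1$, which is precisely the inequality demanded by plook-ahead, with the same $\alpha$ and the same $n$. As $g$ was arbitrary, $G$ has plook-ahead $\alpha$.

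I expect essentially no serious obstacle here: the only points worth checking are that the periodic continuation of the length-$(2n+1)$ window $w$ really lands inside $[w]_{-n}$ — which holds because the window length equals the chosen period $2n+1$, so no overlap consistency condition arises — and that this configuration lies in $X$, which is automatic since we have restricted to the full shift $X = \Sigma^\Z$. The constancy of the cocycle on the cylinder then transfers the look-ahead bound to the periodic point verbatim, so the entire argument is a short unwinding of definitions rather than a genuine construction.
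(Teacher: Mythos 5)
Your proof is correct and is essentially identical to the paper's: the paper takes the periodic point $\sigma^n(w^\Z) \in [w]_{-n}$, which is exactly your explicitly-indexed configuration $x_i = w_{(i+n) \bmod (2n+1)}$, and then transfers the inequality via constancy of the cocycle on the cylinder in the same way. No gaps.
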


\begin{proof}
Let $g \in G$ with cocycle $c$. By the look-ahead assumption, there exist $n \geq 1$ and a cylinder $[w]_{-n}$ with $w \in \Sigma^{2n+1}$ such that $\forall x \in [w]_{-n}: c(x) = m$, with $|m| + \alpha(|m|) \geq n$. The periodic point $x = \sigma^n(w^\Z) \in [w]_{-n}$ has period at most (exactly) $2n+1$ and $|c(x)| + \alpha(|c(x)|) = |m| + \alpha(|m|) \geq n$
\end{proof}

A function $\alpha : \N \to \N$ is \emph{linear} if there exists $c \in \N$ such that $\forall n: \alpha(n) \leq cn+c$.

\begin{definition}
\label{def:lookahead}
Let $\mathcal{G}_{\mathrm{lin}}$ be the class of groups that embed in $\llb \Sigma^\Z \rrb$ with linear plook-ahead, for some alphabet $\Sigma$, $\mathcal{G}_{\mathrm{bnd}}$ the groups that embed with bounded look-ahead. 
Write $\mathcal{G}_{\mathrm{plin}}$ and $\mathcal{G}_{\mathrm{pbnd}}$ 
for the corresponding classes defined with plook-ahead. 
For each of the four classes, $\mathcal{G}_x$, write $\mathcal{G}_{x,\Sigma}$ for the restriction to embeddings in $\llb \Sigma^\Z \rrb$. 
\end{definition}

By Lemma~\ref{lem:lookaplooka}, the look-ahead classes are included in the corresponding plook-ahead classes. We will see in the following section that all these classes are the same, in the sense that they contain the same isomorphism classes of groups.

\section{Graph products and RAAGs}

Recall that all our alphabets have finite cardinality at least two.

\begin{theorem}
For every alphabet $\Sigma$, every RAAG embeds in $\llb \Sigma^\Z \rrb$.
\end{theorem}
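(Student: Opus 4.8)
The plan is to realize an arbitrary RAAG $A_\Gamma$, with defining graph $\Gamma = (V,E)$, as the graph product over $\Gamma$ of copies of $\Z$, one for each vertex, and to embed this graph product using the conveyor-belt technique. The first observation is that the building block $\Z$ already embeds in $\llb \Sigma^\Z \rrb$ with bounded look-ahead: the shift $\sigma$ has constant cocycle, so each nonidentity power $\sigma^k$ has $c \equiv k$ with $|k| \geq 1$, and taking $n = 1$ in the definition of look-ahead shows $\langle \sigma \rangle \cong \Z$ lies in $\mathcal{G}_{\mathrm{bnd},\Sigma}$ for every $\Sigma$. Thus the theorem will follow once I establish that the class $\mathcal{G}_{\mathrm{lin}}$ is closed under graph products and that the resulting embedding can be taken to have bounded look-ahead.

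The core of the argument is the construction of a single action of the graph product on the configurations of a full shift. First I would fix, for each vertex $v$, the embedding $\phi_v$ of its factor, and then use mutually unbordered marker words to cut a generic configuration into finite conveyor belts, on each of which I simulate a product system carrying one track per vertex. The commutation structure is engineered directly into the tracks: when $\{u,v\} \in E$, the generators $g_u$ and $g_v$ are made to act on disjoint layers of the simulated data, so that they commute automatically; when $\{u,v\} \notin E$, their actions are stacked so that composing them in different orders produces visibly different configurations, reproducing the free-product behaviour locally. Because every such map decides its shift after reading only the belt around the active region, and the belt length can be kept linear in the shift amount, the composite elements inherit a bounded look-ahead.

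The step I expect to be the main obstacle is \emph{faithfulness}: proving that the homomorphism from $A_\Gamma$ induced by these generators is injective, i.e.\ that the only relations are the commutations prescribed by $E$. Here I would use the normal form for graph products and argue by a ping-pong/induction argument that any reduced word acts nontrivially, by designing the belt so that each syllable of a reduced word leaves a trace on the simulated configuration that cannot be cancelled by later syllables. The delicate point is to carry this out while simultaneously maintaining the bookkeeping that controls the look-ahead, since the marker and simulation overhead must not grow faster than linearly in the shift produced.

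Finally, to obtain the statement for \emph{every} alphabet $\Sigma$ rather than a single convenient one, I would note that the conveyor-belt mechanism is precisely a device for drawing the configurations of one full shift inside another by a shift-commuting continuous rule; since $|\Sigma| \geq 2$, the construction above can be carried out inside $\llb \Sigma^\Z \rrb$ directly, so the embedding holds for all $\Sigma$ with no loss.
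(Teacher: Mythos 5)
Your top-level reduction is exactly the paper's: a RAAG is the graph product over its defining graph of copies of $\Z$, the shift gives $\Z \in \mathcal{G}_{\mathrm{bnd},\Sigma}$ for every $\Sigma$ (your $n=1$ computation is fine), and the whole weight falls on closure of the linear (p)look-ahead class under graph products, which is the paper's Theorem~\ref{thm:GraphProducts}. However, your sketch of that closure theorem has two genuine gaps, both coming from treating $\llb \Sigma^\Z \rrb$ as if it were an automorphism group of a subshift. First, the claim that generators acting ``on disjoint layers of the simulated data \ldots commute automatically'' is false in a topological full group: there is only one head, and every element acts by shifting the whole configuration, so two elements whose cocycles read disjoint tracks of the \emph{same} belt still move the same head; commutation would require $c_u(x) + c_v(\sigma^{c_u(x)}x) = c_v(x) + c_u(\sigma^{c_v(x)}x)$, which track-disjointness does not provide. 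The paper makes commutation geometric instead: each vertex $u$ gets its own belt type cut out by its own family of mutually unbordered words $w_{u,a,b,v}$, and for $\{u,u'\} \in E$ the $u$-colored and $u'$-colored cells are disjoint subsets of $\Z$, so on any given configuration at most one of the two generators moves the head at all.

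Second, your faithfulness plan --- each syllable ``leaves a trace on the simulated configuration that cannot be cancelled'' --- is impossible for the same reason: elements of $\llb X \rrb$ never write symbols, so no trace can be left; the only record of the action is the net displacement of the head. The paper's faithfulness argument is organized around precisely this constraint: non-commuting types interact through \emph{shared cells}, where adjacent belts of non-commuting types overlap in one position, and for a reduced word one extracts the maximal right-to-left subsequence of pairwise non-commuting syllables and builds a finite configuration $t' t_{j_1} \cdots t_{j_\ell} t'$ on which the head provably progresses monotonically through the shared cells of successive belts. This is also where the linear plook-ahead hypothesis is actually consumed: for each syllable one needs a periodic point that the syllable shifts a definite distance relative to its period, and the cell-allocation arithmetic (arranging $d-1,\ 2p-d-1 \in [a-1, \ldots, 100(a-1)c_u]$ across the blocks) is what lets such a periodic point be laid out on a belt whose blocks have bounded size, which in turn is what yields bounded look-ahead for the image. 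Your sketch never addresses why bounded-size blocks suffice to carry the needed simulated configurations, nor supplies any displacement-based witness of nontriviality, so as written the injectivity of the homomorphism is not established.
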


This includes the case of RAAGs defined by a countably infinite graph. The following theorem is a robustness property for the various look-ahead properties.

\begin{theorem}
For any alphabet $\Sigma$,
$ \mathcal{G}_{\mathrm{bnd},\Sigma} = \mathcal{G}_{\mathrm{pbnd},\Sigma} = \mathcal{G}_{\mathrm{lin},\Sigma} = \mathcal{G}_{\mathrm{plin},\Sigma} = \mathcal{G}_{\mathrm{bnd}} = \mathcal{G}_{\mathrm{pbnd}} = \mathcal{G}_{\mathrm{lin}} = \mathcal{G}_{\mathrm{plin}} $.
\end{theorem}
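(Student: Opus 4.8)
The plan is to establish a cycle of inclusions among the eight classes. The containments that are essentially free are the following: each "bounded" class is contained in the corresponding "linear" class (a bounded function is linear), and by Lemma~\ref{lem:lookaplooka} each look-ahead class is contained in the corresponding plook-ahead class. Likewise each $\Sigma$-restricted class is contained in its unrestricted counterpart by definition. So the real work is to close the loop: I need to show that embeddability with linear plook-ahead into \emph{some} alphabet already forces embeddability with \emph{bounded} look-ahead into a \emph{fixed} alphabet $\Sigma$. Since all the inequalities among the four "shapes" (bnd/pbnd/lin/plin) and the two "scopes" ($\Sigma$-fixed vs.\ free) point one way, it suffices to prove the single strong reverse implication $\mathcal{G}_{\mathrm{plin}} \subseteq \mathcal{G}_{\mathrm{bnd},\Sigma}$ for each fixed $\Sigma$; everything else then collapses by squeezing between this and the trivial inclusions.

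First I would handle the \emph{alphabet change}, i.e.\ that $\llb \Gamma^\Z \rrb$ embeds (with good control on look-ahead) into $\llb \Sigma^\Z \rrb$ for any two alphabets $\Gamma, \Sigma$. This is where I expect the conveyor-belt technique of \cite{Sa16a,KiRo90} advertised in the introduction to do the heavy lifting: one draws $\Gamma$-configurations onto $\Sigma$-configurations via a shift-invariant continuous coding, using blocks of $\Sigma$-symbols (length roughly $\log_{|\Sigma|}|\Gamma|$, i.e.\ a constant factor) to encode single $\Gamma$-symbols, and then transports any element of $\llb \Gamma^\Z \rrb$ to act on the encoded region. Because each $\Gamma$-cell is a constant-size block of $\Sigma$-cells, a shift by $k$ cells in the $\Gamma$-picture corresponds to a shift of $\Theta(k)$ cells in $\Sigma$, so a linear look-ahead bound is preserved as a linear bound (with a constant blow-up in the slope). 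This step gives the equalities across the alphabet-scope: $\mathcal{G}_{x} = \mathcal{G}_{x,\Sigma}$ for each shape $x$.

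The crux is the \emph{amplification from linear to bounded}: given a group $G \le \llb \Sigma^\Z \rrb$ with linear plook-ahead (slope $c$), produce an embedding into some $\llb \Gamma^\Z \rrb$ (then back into $\llb \Sigma^\Z \rrb$ by the previous step) with \emph{bounded} look-ahead. The idea I would pursue is a conveyor-belt/speed-up construction: encode a configuration of the original system together with auxiliary "scaffolding" track(s) on an enlarged alphabet so that a homeomorphism which previously needed to read $Cn+C$ cells to justify a shift of magnitude $n$ now effectively reads only a constant additional amount. Concretely, I would look for a coding in which the cocycle value $m$ and a witnessing periodic pattern of period $\le 2n+1$ are recorded locally, so that after recoding the decision to shift by $|m|$ is visible within $|m| + O(1)$ coordinates; passing from plook-ahead back to look-ahead is exactly where the periodicity of the witness from Definition~\ref{def:lookahead} is exploited, since a period-$p$ point is determined by a length-$p$ window and $p$ is already controlled in terms of $|m|$. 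I expect this to be the main obstacle, because one must maintain, simultaneously, that (i) the recoding is by a genuine element of $\llb \Gamma^\Z \rrb$ (shift-invariant, continuous cocycle, invertible), (ii) the group structure of $G$ is preserved (the map is an injective homomorphism, not just a set injection), and (iii) the look-ahead of \emph{every} group element, not just the generators, is simultaneously brought down to bounded; controlling the look-ahead uniformly across the whole group, rather than element-by-element, is the delicate point.

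Once the single inclusion $\mathcal{G}_{\mathrm{plin}} \subseteq \mathcal{G}_{\mathrm{bnd},\Sigma}$ is proven for every fixed $\Sigma$, I would assemble the full chain: for any fixed $\Sigma$,
\[
\mathcal{G}_{\mathrm{bnd},\Sigma} \subseteq \mathcal{G}_{\mathrm{pbnd},\Sigma} \subseteq \mathcal{G}_{\mathrm{plin},\Sigma} \subseteq \mathcal{G}_{\mathrm{plin}} \subseteq \mathcal{G}_{\mathrm{bnd},\Sigma},
\]
using the trivial implications for the first two steps, the scope-enlargement for the third, and the amplification for the last. This forces equality of all the $\Sigma$-indexed classes with their unrestricted versions, and since the same holds for the (formally smaller, hence squeezed) look-ahead classes $\mathcal{G}_{\mathrm{bnd}}$ and $\mathcal{G}_{\mathrm{lin}}$, all eight classes coincide. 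The one piece I would flag as needing care in the write-up is making precise, in the amplification step, the quantitative bookkeeping that a slope-$c$ linear bound in one alphabet becomes an $O(1)$ bound in the recoded system; I would isolate that as a lemma so that the final theorem is a clean concatenation of inclusions.
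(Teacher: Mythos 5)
Your plan is correct and is essentially the paper's own argument: all eight classes are squeezed between the trivial inclusions (bounded $\Rightarrow$ linear, look-ahead $\Rightarrow$ plook-ahead via Lemma~\ref{lem:lookaplooka}, fixed-$\Sigma$ $\Rightarrow$ free) and the single strong inclusion $\mathcal{G}_{\mathrm{plin}} \subseteq \mathcal{G}_{\mathrm{bnd},\Sigma}$, which the paper obtains by specializing Theorem~\ref{thm:GraphProducts} to a one-vertex graph rather than proving it afresh. The conveyor-belt amplification you sketch (laying out a periodic witness whose period is linearly bounded in the cocycle value, so the head traverses all but a bounded portion of the belt, with alphabet change absorbed into the choice of unbordered block words) is precisely the mechanism of that theorem's proof.
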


Due to this theorem, we write this class as simply $\mathcal{G}$. These theorems are direct consequences of Theorem~\ref{thm:GraphProducts} below. We need a simple lemma. 

\begin{lemma}
\label{lem:two}
If $G \in \mathcal{G}_{\mathrm{plin}}$ then $G$ admits an action with linear plook-ahead where the cocycle of every group element takes only even values.
\end{lemma}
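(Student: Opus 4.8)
The plan is to realize $G$ on a \emph{stretched} subshift, in which each genuine data cell is separated from the next by a single dummy cell, so that moving $k$ data cells always means moving $2k$ physical cells and every cocycle value is forced to be even. Fix an embedding $G \le \llb \Sigma^\Z \rrb$ with linear plook-ahead $\alpha$, and write $c_g$ for the cocycle of $g \in G$. First I would pick a fresh symbol $\# \notin \Sigma$, set $\Sigma' = \Sigma \cup \{\#\}$, and define the insertion map $\iota : \Sigma^\Z \to \Sigma'^\Z$ by $\iota(x)_{2i} = x_i$ and $\iota(x)_{2i+1} = \#$. Let $Y = \iota(\Sigma^\Z) \cup \sigma(\iota(\Sigma^\Z))$; this is a subshift (closed and $\sigma$-invariant, with $\sigma$ swapping the two ``phases'' according to the parity carrying $\#$). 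Every $y \in Y$ has a well-defined decoding $\pi(y) \in \Sigma^\Z$ obtained by reading the non-$\#$ cells in order, and $\pi \circ \sigma^2 = \sigma \circ \pi$ on each phase. I then define $\hat g \in \Homeo(Y)$ by $\hat g(y) = \sigma^{2 c_g(\pi(y))}(y)$. Because an even shift preserves the phase and $c_g \circ \pi$ is continuous, $\hat g$ lies in $\llb Y \rrb$ with cocycle $\hat c_g = 2\,(c_g \circ \pi)$, which takes only even values.

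Next I would check that $g \mapsto \hat g$ is an injective homomorphism. The decoding map intertwines the two actions: $\pi(\hat g(y)) = \sigma^{c_g(\pi(y))}(\pi(y)) = g(\pi(y))$, so $\pi \hat g = g \pi$. Combined with the cocycle-of-composition identity $c_{hg} = c_h \circ g + c_g$, this gives $\widehat{hg} = \hat h \hat g$, and $\hat g = \mathrm{id}$ forces $c_g \circ \pi \equiv 0$, hence $c_g \equiv 0$ (as $\pi$ is onto) and $g = \mathrm{id}$. Thus $G$ embeds in $\llb Y \rrb$ through even cocycles.

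For the plook-ahead, I would transfer witnesses through $\iota$ and rescale $\alpha$. Given $g$, take a periodic $x$ of period $p \le 2n+1$ with $m = c_g(x)$ satisfying $|m| + \alpha(|m|) \ge n \ge 1$. Its stretch $\iota(x)$ is periodic of period $2p \le 2(2n+1)$, with cocycle value $\hat c_g = 2m$. Taking $N = p$ gives $2N+1 \ge 2p$, and since $2|m| + 2\alpha(|m|) \ge 2n \ge p-1$, the inequality $|2m| + \alpha'(|2m|) \ge N$ holds as soon as $\alpha'(t) \ge 2\alpha(\lceil t/2 \rceil) + 1$. As $\alpha$ is linear, such a linear $\alpha'$ (with roughly doubled slope) exists, so the stretched action has linear plook-ahead and even cocycles.

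The main obstacle is the passage from the subshift $Y$ to a genuine full shift, as demanded by the definition of $\mathcal{G}_{\mathrm{plin}}$. The clean way to do this is to port $\llb Y \rrb$ into a full-shift topological full group by the conveyor-belt (bucket-passing) technique that is this paper's central tool. The point I would have to verify is that this porting respects evenness: since every belt occupies an even number of physical cells, all displacements produced within a belt, as well as those arising from reflection at a belt boundary, stay even, while the periodic witnesses and the linear plook-ahead bound survive up to the same kind of linear rescaling used above. Reconciling evenness, the periodic-point witnesses, and linearity \emph{simultaneously} under this embedding is the delicate part; the stretching construction on $Y$ itself is routine. Equivalently, one may present the even action on $\Sigma^\Z$ directly, identifying the subgroup of everywhere-even-cocycle elements with $\llb (\Sigma \times \Sigma)^\Z \rrb$ via the interleaving homeomorphism $\Phi : (\Sigma \times \Sigma)^\Z \to \Sigma^\Z$ that conjugates the shift to $\sigma^2$.
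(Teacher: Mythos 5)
Your core computations are sound, but your main route does not prove the lemma as stated: $\mathcal{G}_{\mathrm{plin}}$ is defined via embeddings into $\llb \Sigma^\Z \rrb$ for a \emph{full} shift, and you park the action on the proper subshift $Y$, deferring the return to a full shift to a conveyor-belt porting that you do not carry out. That step is a genuine gap, and the specific porting you invoke would fail as proposed: in the embedding of Theorem~\ref{thm:EmbeddingsBetween}, consecutive periodic cells of a block are physically adjacent, so a single simulated step displaces the head by $\pm 1$ physical cells, and the other displacements (top-cell hops of one block length, turn-arounds at belt ends through transition cells) have uncontrolled parity. That theorem only controls the look-ahead up to a multiplicative constant; it says nothing about parity of cocycle values, which is the entire content of the lemma. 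Your remark that ``every belt occupies an even number of physical cells'' does not make the \emph{individual} steps even; repairing this would mean redesigning the belts so that every edge of the auxiliary graph has even physical length, exactly the work you flag as delicate and leave unverified.

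The irony is that your closing sentence \emph{is} the paper's proof, and it makes the whole $Y$-detour unnecessary. The paper never leaves $\Sigma^\Z$: given $g$ with cocycle $c_g$, define a new action on the same full shift whose cocycle at $x$ is $2\,c_g\bigl((x_{2i})_{i \in \Z}\bigr)$, i.e.\ read only the even-position subsequence and double the displacement. Equivalently, as you say, embed $\llb \Sigma^\Z \rrb$ into $\llb (\Sigma\times\Sigma)^\Z \rrb$ through the first track and conjugate by the interleaving homeomorphism; since every shift applied is even, the parity classes are preserved, the map is a faithful action by the usual cocycle composition identity, and the cocycles are everywhere even, with no porting step at all. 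The plook-ahead witnesses are the original periodic points with a fixed symbol inserted at every odd coordinate, and your rescaling estimate (period $2p \leq 2(2n+1)$, cocycle value $2m$, $\alpha'(t) \geq 2\alpha(\lceil t/2 \rceil)+1$ still linear) is precisely the verification needed. So: promote your last sentence to the argument, attach that computation, and you have a complete proof; as written, the burden sits on an unproved and parity-breaking porting step.
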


\begin{proof}
Simply ignore the odd coordinates: apply the cocycle to the subsequence of symbols at even positions, and multiply its values by $2$. Clearly we still have linear plook-ahead, using the same periodic points, but inserting a fixed symbol in every odd coordinate.
\end{proof}

\begin{theorem}
\label{thm:GraphProducts}
Any countable graph product of groups in $\mathcal{G}_{\mathrm{plin}}$ is in $\mathcal{G}_{\mathrm{bnd, \Sigma}}$ for any alphabet $\Sigma$.
\end{theorem}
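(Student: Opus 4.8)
The plan is to give a single conveyor-belt construction realizing the whole graph product $P$ inside $\llb \Sigma^\Z \rrb$, engineered so that every admissible shift is (roughly) \emph{self-certifying}: the window the cocycle must read in order to fix a shift always exceeds the shift itself by at most an additive constant. This self-certification is what converts the multiplicative slack of linear plook-ahead into the additive slack of bounded look-ahead. First I would normalize the input. Each factor $G_v$ embeds in some $\llb \Sigma_v^\Z \rrb$ with linear plook-ahead, and by Lemma~\ref{lem:two} I may assume each cocycle is even-valued; after re-encoding, this reserves the odd ambient coordinates for control data (markers, turnarounds, seams, type/length annotations) and the even coordinates for simulated content. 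Since $|\Sigma| \ge 2$, a standard block recoding reduces the task to building the embedding over one convenient auxiliary alphabet and then transferring it to $\Sigma$, which alters displacements and windows by at most a fixed factor and hence preserves membership in $\mathcal{G}_{\mathrm{bnd},\Sigma}$; so throughout I aim for bounded look-ahead with a single uniform constant.

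The core gadget handles one factor $G_v$. Linear plook-ahead supplies, for each $g$ of displacement $m$, a periodic point of period $p$ with $p \le 2n+1$ and $n \le |m| + \alpha(|m|)$; linearity gives $p = O(|m|)$, so the witness period is controlled \emph{linearly} by the displacement. Because periodic points are dense and cocycles are locally constant, two elements of $\llb \Sigma_v^\Z \rrb$ agreeing on all periodic points coincide, so it suffices to act faithfully on foldings of periodic points. I fold each period into a finite conveyor belt — a block delimited by turnaround markers whose two strands let the origin traverse the cyclic word of length $2p$ by ordinary shifts — and realize $g$ as the induced head motion. The bounded-look-ahead witness for the image $\hat g$ is then obtained by placing the origin at a seam and annotating, on the reserved coordinates, the distance to the relevant turnaround: the cocycle reads an annotation of length $|m| + O(1)$ and shifts by $m$, and since $p = O(|m|)$ this annotation fits inside one period and is consistent with the belt being a genuine periodic point. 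The $O(1)$ overhead is merely the cost of recognizing a marker and is independent of the slope of $\alpha$ for $G_v$ — this is exactly how the differing linear slopes collapse to one bounded constant.

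To combine the factors I would use the heap-of-pieces (trace-monoid) description of $P$: a reduced element is a heap of labelled pieces in which pieces of commuting (adjacent) type occupy independent columns and slide past one another, while non-commuting (non-adjacent) pieces stack. A configuration encodes such a heap as a serial/parallel arrangement of the single-factor belts, each piece being a $G_v$-belt that carries a finite-word encoding of the name of $v$ (there are countably many, possibly infinite, factors) together with a $G_v$-witness block. The image of a generator of $G_v$ relocates the origin through this fixed heap, sliding freely past adjacent-type belts but being blocked by non-adjacent-type belts; the edge relations of $\Gamma$ thus hold by construction and none others are created, while faithfulness follows because a suitably chosen heap makes the origin's reachable relocations reproduce the $P$-action, each factor being separated by its periodic belts as above.

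I expect the main obstacle to be the bookkeeping that secures all three requirements simultaneously: that the belt-adjacency rules reproduce \emph{exactly} the graph product (commutation across edges, free-product behaviour across non-edges, no spurious relations); that the encoding stays self-certifying for \emph{composite} elements, whose witnesses have displacement equal to a sum of piece-displacements and must still be certified by a window exceeding it by only an additive constant; and that all of this is uniform over a countable family of factors of unbounded complexity and unbounded look-ahead slope. The quantitative heart is to verify that every relocation — seeking, entering, sliding past, or crossing a piece — has ambient displacement at least its certifying window minus a fixed constant, using the linear bound $p = O(|m|)$ on each piece to keep annotation lengths proportional to displacements. Granting this, the per-factor estimate assembles to bounded look-ahead with a single constant, yielding membership in $\mathcal{G}_{\mathrm{bnd},\Sigma}$ for every alphabet $\Sigma$.
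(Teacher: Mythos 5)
Your proposal reproduces the paper's general architecture (even cocycles via Lemma~\ref{lem:two}, periodic witness points folded onto finite conveyor belts with $p = O(|m|)$, unbordered marker words, belts interacting according to the graph), but it has a genuine gap at exactly the point you yourself flag as ``the main obstacle'': the claim that the additive overhead is a \emph{single uniform constant} over a countably infinite family of factors. The belts for the countably many factors must be tagged by mutually unbordered code words (your ``type/length annotations'' naming $v$), and over an infinite vertex set these words necessarily have unbounded lengths. Hence the part of the certifying window that the head does \emph{not} traverse --- e.g.\ the final block or marker of the last belt entered --- is bounded for each fixed factor but unbounded over the family, so the direct construction gives only \emph{linear} look-ahead when the graph is infinite (the paper obtains the ratio $\frac{|t| - m - 2|t'|}{|t|} \geq 1/3$), never bounded look-ahead with one constant. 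The paper closes this gap not by uniformity, which is unavailable, but by a bootstrap your proposal lacks: it proves bounded look-ahead for \emph{finite} graph products and linear look-ahead for countable ones, then observes that the countable graph product, now lying in $\mathcal{G}_{\mathrm{lin},\Sigma} \subseteq \mathcal{G}_{\mathrm{plin}}$ by Lemma~\ref{lem:lookaplooka}, is itself a one-node graph product, so a second application of the theorem places it in $\mathcal{G}_{\mathrm{bnd},\Sigma}$. Without this self-application, your estimates do not yield the stated conclusion for infinite graphs.

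Two further points are under-argued or misdescribed. First, faithfulness: ``a suitably chosen heap makes the origin's reachable relocations reproduce the $P$-action'' assumes what must be proved. The paper's argument extracts greedily from the right of a reduced word a chain of letters, each non-commuting with the previously chosen one, builds a configuration $t' t_{j_1} \cdots t_{j_\ell} t'$ of belts for exactly those letters, and shows the head crosses each belt in turn; arranging that the witness periodic point starts at the leftmost top precell and ends at the rightmost boundary requires the quantitative allocation of top/bottom cell counts in $[1, 100 c_u]$, including a separate case for $n \leq 2 c_u$ where evenness of the cocycle (the actual use of Lemma~\ref{lem:two}, giving $d \geq 2$ --- not, as you suggest, reserving odd coordinates for control data) is what saves the construction. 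Second, your mechanism for non-commutation (``blocked by non-adjacent-type belts'') is backwards: in the paper, non-adjacency of $u, u'$ is implemented by consecutive belts \emph{overlapping} in a shared cell through which the head passes from one belt into the next, while adjacency makes the $u$- and $u'$-colored cells disjoint so that the images of $G_u$ and $G_{u'}$ commute for the trivial reason of disjoint supports; nothing is blocked, and without specifying the shared-cell device your sketch does not establish that no spurious relations hold.
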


\begin{proof}
We show that if $G$ is a countable graph product of groups in $\mathcal{G}_{\mathrm{plin}}$, then $G \in \mathcal{G}_{\mathrm{lin, \Sigma}}$, and if the product is finite then $G \in \mathcal{G}_{\mathrm{bnd, \Sigma}}$. To get countable graph products $G$ in $\mathcal{G}_{\mathrm{bnd, \Sigma}}$, one can simply apply the theorem again, seeing $G$ as a one-node graph product.

We start with a notion of ``simulation'' for topological full group elements. Let $\Theta$ be a finite alphabet, $s : X \to \Theta$ is a continuous map, $f \in \llb X \rrb$, $Y = \Theta^\Z$, and $g \in \llb Y \rrb$ has cocycle $c$. We define $\frac{g}{f, s} \in \llb X \rrb$ as follows: Define a morphic image for the dynamical system $(X, f)$ by recording the $s$-images over the $f$-orbit, i.e.\ $\pi : X \to \Theta^\Z$, $\pi(x)_i = s(f^i(x))$. If $x \in X$, set $\frac{g}{f,s}(x) = f^{c(\pi(x))}(x)$.

We give another description of this operation, which is how we use this in practice. A topological full group element $f$ is determined by its cocycle $c : X \to \Z$. Applying the cocycle to every shift of a configuration, we obtain an \emph{enriched cocycle}, namely the shift-commuting continuous function $c' : X \to \Z^\Z$ where $c'(x)_i = c(\sigma^i(x))$. Taking $\Z$ as nodes, and writing a directed edge from $n$ to $n + c'(x)_n$ we obtain a directed graph whose edges indicate how the configuration moves inside its orbit, more precisely if we imagine the origin at a node, then the forward edge tells us the origin after applying $f$. We sometimes call this ``imagined origin'' the \emph{head}.

Following the edges forward in this graph, the configuration splits into cycles and bi-infinite paths. Similarly, applying $s$ at every position we obtain a map $s' : X \to Y$, $s'(x)_i = s(\sigma^i(x))$. Now, the application of $\frac{g}{f,s}$ simply reads the ``simulated'' configuration (in the symbols in $s'(x)$) on the cycle or path that the origin is part of, applies to cocycle of $g$ to that simulated configuration, and finally applies $f$ as if it were the shift map on this cycle or path.

Now, let the graph defining a graph product be $(V,E)$, with groups $G_u \in \llb \Sigma_u^\Z \rrb$ at the countably many nodes $u \in V$. Pick an order $<$ on $V$ with order type $\omega$. By Lemma~\ref{lem:two} we may assume that the cocycle of each $g \in G_u$, for all $u \in V$, takes only even values. To each $g \in G_u$ we associate a topological full group element $\hat g \in \llb X \rrb$, and we will always take $\hat g = \frac{g}{f_u,s_u}$ for suitable $f_u, s_u$.

We will describe $f_u$ and $s_u$ simultaneously for all $u \in V$. For this, we associate to $x \in X$ an auxiliary graph structure where for each $u$ we have a set of edges of color $u$. The subgraph of edges with color $u$ splits into cycles and bi-infinite paths, and isolated nodes. Equivalently, the in- and out-degrees of each node are equal to each other, and both either $0$ or $1$. For the purpose of simulation, we imagine every node is completed afterward to have in- and out-degree $1$ for color-$u$ edges, by adding a self-loop. The action of $f_u$ is to simply follow the forward edges of color $u$, as in the description of simulation in terms of graphs above.

Nodes are also colored, and a node can have up to two colors, and for each color it carries a symbol: if a node is of color $u$, then it carries a \emph{$u$-symbol} $a \in \Sigma_u$. The map $s_u$ simply outputs the $u$-symbol of the node at the current position if the node has color $u$, and otherwise outputs a fixed arbitrary symbol in $\Sigma_u$ (which will not have any effect).

Once we have defined this directed vertex- and edge-colored graph (with additional symbols at some nodes), the choices of $f_u$ and $s_u$ imply that the action of $\hat g$ for $g \in G_u$ simply interprets the configuration written in the $u$-symbols on the cycle or bi-infinite path that the origin is part of, in the graph formed by $u$-edges, and moves around this cycle or path according to its cocycle.

It now remains to describe the graph. Suppose the group $G_u$ has linear look-ahead function $\alpha_u(n) \leq c_u n + c_u$, where we may assume $c_u \geq 1$. For each $(a,b) \in \{1, ..., 100 c_u\}^2$ and word $v \in \Sigma_u^{a+b}$ we pick a word $w_{u,a,b,v} \in \Sigma^*$, so that these words are mutually unbordered over all $u, a, b, v$, and $|w_{u,a,b,v}| \geq a + b$. Let $W_u = \{w_{u,a,b,c} \;|\; a, b, v\}$. Let $t'$ be mutually unbordered with all these words.

A maximal segment of consecutive words from $W_u = \{w_{u, a, b, v} \;|\; a, b, v\}$ (possibly infinite in one or two directions) is called a \emph{type-$u$ conveyor belt}. We call the words $w_{u, a, b, v}$ forming a conveyor belt its \emph{blocks}. The first $a$ positions in such a block (appearing in a configuration $x \in B^\Z$) are called the \emph{top precells} and the following $b$ positions the \emph{bottom precells}.

We now define the \emph{top cells} and \emph{bottom cells} of a block in a conveyor belt of type $u$, by slightly perturbing the precells. If $u > u'$ and $\{u,u'\} \notin E$, and a block $w_{u,a,b,v}$ appears immediately to the left of a $w_{u',a',b',v'}$-block then the $a$th bottom cell of the $w_{u,a,b,v}$-block coincides with the leftmost top precell of that $w_{u',a',b',v'}$-block. Similarly, if $u > u'$ and $\{u,u'\} \notin E$, and a block $w_{u,a,b,v}$ appears immediately to the right of a $w_{u',a',b',v'}$-block then the leftmost top cell of the $w_{u,a,b,v}$-block coincides with the $b'$th bottom precell of that $w_{u',a',b',v'}$-block. Otherwise the $i$th top (resp.\ bottom) cell of any block is equal to its $i$th top (resp.\ bottom) precell. A cell that is both the top and bottom cell of a block (necessarily of conveyor belts of different types) is called a \emph{shared cell}.

Now, the auxiliary graph has nodes $\Z$, a node has color $u$ if it is a top or bottom cell of a type-$u$ conveyor belt. The edges of color $u$ connect each type-$u$ conveyor belt into a cycle, bi-infinite path or two bi-infinite paths, as follows: in a single conveyor belt, first connect the top cells from left to right, and in consecutive blocks $I, J$ ($I$ immediately to the left of $J$) connect the rightmost top cell of $I$ to the leftmost top cell of $J$. Do the same for the bottom cells, in inverse direction, so all edges go right-to-left. Finally, at the ends of a conveyor belt join the ends of paths together, e.g.\ if immediately to the left of a $u$-block $I$ there is no $u$-block, then connect the leftmost bottom cell to the leftmost top cell. Any finite conveyor belt gives an cycle, which, imagining the top cells on top of the bottom cells, looks like a conveyor belt.

Doing this for all $u$, we have specified the nodes and edges, and their colors $u$. Finally, the $u$-symbol on a $u$-colored vertex is read from the word $v$ in $w_{u,a,b,v}$. This construction is illustrated in Figure~\ref{fig:Construction}.

\begin{figure}
\begin{center}
\begin{tikzpicture}[scale=0.35]
\node () at (0,0) {0};
\node () at (1,0) {1};
\node () at (2,0) {1};
\node () at (3,0) {1};
\node () at (4,0) {0};
\node () at (5,0) {1};
\node () at (6,0) {1};
\node () at (7,0) {0};
\node () at (8,0) {0};
\node () at (9,0) {0};
\node () at (10,0) {1};
\node () at (11,0) {0};
\node () at (12,0) {0};
\node () at (13,0) {1};
\node () at (14,0) {1};
\node () at (15,0) {0};
\node () at (16,0) {0};
\node () at (17,0) {0};
\node () at (18,0) {1};
\node () at (19,0) {0};
\node () at (20,0) {1};
\node () at (21,0) {1};
\node () at (22,0) {0};
\node () at (23,0) {0};
\node () at (24,0) {0};
\node () at (25,0) {0};
\node () at (26,0) {1};
\node () at (27,0) {0};
\node () at (28,0) {1};
\node () at (29,0) {1};
\node () at (30,0) {0};
\node () at (31,0) {0};
\node () at (32,0) {0};
\node () at (33,0) {0};
\node () at (34,0) {0};
\node () at (35,0) {1};
\node () at (36,0) {0};
\node () at (37,0) {1};
\node () at (38,0) {1};
\node () at (39,0) {1};
\node () at (40,0) {0};
\draw [decorate,decoration={brace,amplitude=10pt},xshift=0,yshift=1]
(4.5,0.8) -- (11.5,0.8) node [black,midway,yshift=0.6cm] 
{\footnotesize $w_{1,1,3,0111}$};
\draw [decorate,decoration={brace,amplitude=10pt},xshift=0,yshift=1]
(12.5,0.8) -- (19.5,0.8) node [black,midway,yshift=0.6cm] 
{\footnotesize $w_{1,1,3,0111}$};
\draw [decorate,decoration={brace,amplitude=10pt},xshift=0,yshift=1]
(19.5,0.8) -- (27.5,0.8) node [black,midway,yshift=0.6cm] 
{\footnotesize $w_{1,2,2,1100}$};
\draw [decorate,decoration={brace,amplitude=10pt},xshift=0,yshift=1]
(27.5,0.8) -- (36.5,0.8) node [black,midway,yshift=0.6cm] 
{\footnotesize $w_{0,2,3,10101}$};
\draw[thick] (4.5,3) -- (4.5,-8);
\draw[thick] (11.5,3) -- (11.5,-8);
\draw[thick] (12.5,3) -- (12.5,-8);
\draw[gray] (19.5,3) -- (19.5,-8);
\draw[thick] (27.5,3) -- (27.5,-8);
\draw[thick] (36.5,3) -- (36.5,-8);
\node[circle,inner sep =0.4] (a) at (5,-3) {\footnotesize $0$};
\node[circle,inner sep =0.4] (b) at (6,-5) {\footnotesize $1$};
\node[circle,inner sep =0.4] (c) at (7,-5) {\footnotesize $1$};
\node[circle,inner sep =0.4] (d) at (8,-5) {\footnotesize $1$};
\node[circle,inner sep =0.4] (e) at (13,-3) {\footnotesize $0$};
\node[circle,inner sep =0.4] (f) at (14,-5) {\footnotesize $1$};
\node[circle,inner sep =0.4] (g) at (15,-5) {\footnotesize $1$};
\node[circle,inner sep =0.4] (h) at (16,-5) {\footnotesize $1$};
\node[circle,inner sep =0.4] (i) at (20,-3) {\footnotesize $1$};
\node[circle,inner sep =0.4] (j) at (21,-3) {\footnotesize $1$};
\node[circle,inner sep =0.4] (k) at (22,-5) {\footnotesize $0$};
\node[circle,inner sep =0.4] (l) at (23,-5) {\footnotesize $0$};
\node[circle,inner sep =0.4] (m) at (28,-3) {\footnotesize $s$};
\node[circle,inner sep =0.4] (n) at (29,-3) {\footnotesize $0$};
\node[circle,inner sep =0.4] (o) at (30,-5) {\footnotesize $1$};
\node[circle,inner sep =0.4] (p) at (31,-5) {\footnotesize $0$};
\node[circle,inner sep =0.4] (q) at (32,-5) {\footnotesize $1$};
\draw[->] (b) to[->,bend left=8] (a);
\draw[->] (a) to[->,bend left=20] (d);
\draw[->] (d) to[->,bend left=80,looseness=2] (c);
\draw[->] (c) to[->,bend left=80,looseness=2] (b);
\draw[->] (f) to[->,bend left=8] (e);
\draw[->] (g) to[->,bend left=80,looseness=2] (f);
\draw[->] (h) to[->,bend left=80,looseness=2] (g);
\draw[->] (e) to[->,bend left=8] (i);
\draw[->] (i) to[->,bend left=80,looseness=2] (j);
\draw[->] (j) to[->,bend left=8] (m);
\draw[->] (m) to[->,bend left=80,looseness=1] (k);
\draw (22.5,-5.5)--(23.5,-4.5);
\draw (22.5,-4.5)--(23.5,-5.5);
\draw[->] (k) to[->,bend left=8] (h);
\draw[->,gray] (o) to[->,bend left=8] (m);
\draw[->,gray] (m) to[->,bend left=80,looseness=2] (n);
\draw[->,gray] (p) to[->,bend left=80,looseness=2] (o);
\draw[->,gray] (q) to[->,bend left=80,looseness=2] (p);
\draw[->,gray] (n) to[->,bend left=20] (q);
\end{tikzpicture}
\end{center}
\caption{An illustration of the auxiliary graph structure drawn on a configuration. We have $w_{1,1,3,0111} = 1100010$, $w_{1,2,2,1100} = 11000010$, $w_{0,2,3,10101} = 110000010$, where $0, 1 \in V$ and $0 < 1$, and $\Sigma = \Sigma_0 = \Sigma_1 = \{0,1\}$. The symbol $s$ in the shared cell is interpreted as $0$ when seen from the left, and as $1$ when seen from the right. Edges with color $0$ are shown gray. The cells are shown below their actual positions on the configuration, bottom cells lower than top cells (with the exception of $s$ which is both a top and a bottom cell).}
\label{fig:Construction}
\end{figure}
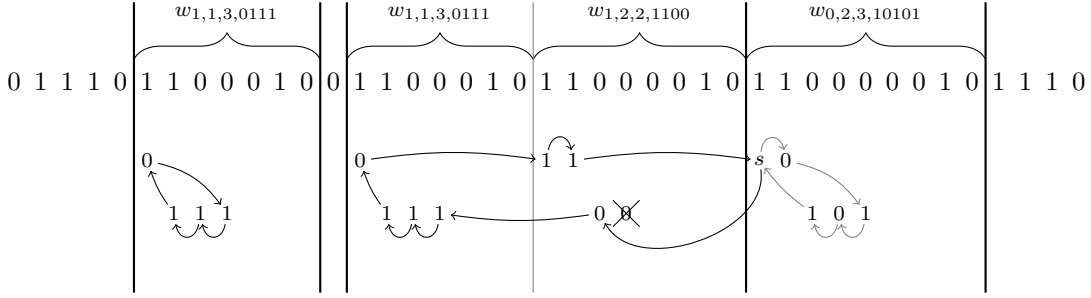

As explained, this drawing of a graph completely describes the mappings $g \mapsto \hat g$ for $g \in G_u$, as soon as the color-$u$ data is given by a continuous shift-commuting procedure for each fixed $u \in V$. Indeed it is: shift-invariance is obvious, and for continuity we observe that the top and bottom pre cells of a conveyor belt of type $u$ are clearly given by a continuous rule, and to get the actual top and bottom cells, we simply need to check whether the converyor belt of type $u$ ends, and if it does, whether a neighboring block exists and is part of a conveyor belt of a lower type $u' < u$, thus we only need to look a finite distance away from the border of the conveyor belt.

It is easy to see that this action gives a homomorphic image of the graph product $G$ corresponding to the groups $(G_u)_{u \in V}$ and the graph $(V,E)$, in the topological full group: for this we simply need to show that whenever $\{u,u'\} \in E$, the images of $G_u$ and $G_{u'}$ commute. This is obvious, since by construction the cycles with edges of colors $u$ and $u'$ are disjoint.

Next we show this is an embedding of $G$, i.e.\ the action of $G$ is faithful. Suppose $w \in (\bigcup_{u \in U} G_u)^*$ is a nontrivial word in the graph group, which is reduced in the standard sense, meaning it has minimal length under the reductions that join the subword $gh \in G_u^2$ to a single symbol $gh \in G_u$, the reduction that removes identity elements, and the commutation relation.

Suppose $|w| > 0$ and write $w$ as
\[ g_{u_m} \circ \cdots \circ g_{u_2} \circ g_{u_1}. \]
Take a maximal sequence of non-commuting elements $g_u$ inside $w$, starting from the right end, i.e.\ take a subword $g_{u_{j_\ell}} \circ \cdots \circ g_{u_{j_2}} \circ g_{u_{j_1}}$ greedily setting $j_1 = 1$ and taking $j_2$ the first one from the right which does not commute with $u_1$, and so on. If everything commutes and $\ell = 1$, then certainly the image in the topological full group acts nontrivially, by considering configurations with a single type-$u$ conveyor belt.

Suppose then $\ell \geq 2$. We show that there exists a word
\[ t = t' t_{j_1} t_{j_2} \cdots t_{j_\ell} t' \]
where each $t_{j_i}$ is a type-$u_{j_i}$ conveyor belt, such that for every $m \geq 1$, every configuration in $[t]_{-|t'|}$ is shifted, after applying $\hat g_{u_m} \cdots \hat g_{u_2} \circ \hat g_{u_1}$, so that the head is in the rightmost shared cell of the conveyor belt $t_{j_m}$.

Whether the shared cell is actually located in $t_{j_i}$ or $t_{j_{i+1}}$ depends on which of $u_{j_i}$ or $u_{j_{i+1}}$ is larger in the arbitrary order $<$, but this does not really matter; to see the situation more symmetrically, it may be helpful to imagine that $\hat g$ acts by moving \emph{precells}, but that we suitably teleport from the shared cell to the corresponding precell. In other words, $\hat g$ can be seen as the element that acts by moving along the conveyor belt on the precells, but conjugated so that one or two precells (at the left and/or right boundary) are moved to a shared cell, and when applying the actions $\hat g_{u_i}$ in order, this conjugating move happens either before or after the application of $\hat g$ depending on the ordering of $V$.

Now, to construct the words $t_{j_i}$, for ${g_{u_{j_i}}} \in G_u$, using the assumption of linear plook-ahead with linearity constant $c_u$, for some $n \geq 1$ pick a periodic point $x$ of period $p \leq 2n+1$ where the cocycle $c$ of $g_{u_{j_i}}$ satisfies
$|c(x)| + \alpha(|c(x)|) \geq n$. 

Now as $t_{j_i}$ we will pick a finite conveyor belt such that the simulated configuration output by $s_u$ on the finite cycle formed by the color-$u_{j_i}$ edges is $x$, the simulated origin of $x$ is in the leftmost top precell of $t_{j_i}$, and the simulated position $p$ in the orbit of $x$ is in the rightmost bottom precell of $t_{j_i}$. This is possible because we have allowed enough flexibility (indeed severe overkill) in the number of top cells and bottom cells included in a single block.

We now explain the allocation of the cell counts more precisely. Let us first massage the numbers $n, p$ picked above: We can take $n \geq 2$ by the choice of the function $f$, and we can take $n > |c(x)|$ by simply choosing a larger $n$ if necessary. Now, by possibly replacing $p$ with $2^r p$ ($x$ is also $2^rp$-periodic), we may assume $|c(x)| < n \leq p-1 \leq 2n+1$. 

Suppose $c(x) = d > 0$, the negative case being symmetric. We think of $x$ as $2p$-periodic, and construct a converyor belt with some number of blocks $a \geq 2$. We take one top cell in the leftmost block, divide $d-1$ top cells evenly between the $a-1$ rightmost blocks, take one bottom cell in the rightmost block, and divide $2p-d-1$ bottom cells evenly between the $a-1$ leftmost blocks. It suffices to check that this is meaningful, as we can then simply lay out $x$ on the cycle as the $2p$-periodic configuration output by $s_u$, as described above.

Now, each cell gets $\lfloor (d-1)/(a-1) \rfloor$ or $\lceil (d-1)/(a-1) \rceil$ top cells, and $\lfloor (2p-d-1)/(a-1) \rfloor$ or $\lceil (2p-d-1)/(a-1) \rceil$ bottom cells. We thus simply have to ensure that
\[ (d-1)/(a-1), (2p-d-1)/(a-1) \in [1,...,100 c_u], \]
equivalently
\[ d-1, 2p-d-1 \in [a-1, ..., 100 (a-1) c_u]. \]


Picking $a = d$, $d-1$ is in this interval, and the latter being in the interval is equivalent to $2p-d-1 \in \{d-1, ..., 100 (d-1) c_u\}$. From the inequalities above we have
\[ 2p-d-1 > p > n \geq d-1. \]
If $n > 2c_u$ then observe that 
$n \leq (1 + c_u)d + c_u$ follows from $|c(x)| + \alpha(|c(x)|) \geq n$, and thus
\[ 2p-d-1 \leq 2p \leq 4n+4 \leq 4(1 + c_u)d + 4c_u + 4 \leq 100 (d - 1) c_u \]

If $n \leq 2c_u$ then $p \leq 4c_u+2$ and we can use a conveyor belt with two blocks: one top cell and $0 < p-d-1 \leq 4 c_u$ bottom cells in the leftmost block; $0 < d-1 < 2c_u$ top cells and one bottom cell in the rightmost block (note that $d \geq 2$ since $d$ is even).

Finally, we show that the look-ahead is (or can be made) linear, and that it is bounded in the case when the graph $(V,E)$ is finite. For this, simply observe that any configuration in $[t]_{-t'}$ is shifted by at least $|t| - m - 2|t'|$ where $m$ is the length of the rightmost block. If the graph $(V,E)$ is finite, this is a bounded look-ahead. If it is infinite, at least $\frac{|t| - m - 2|t'|}{|t|} \geq 1/3$ for large enough $t$ (the worst case being that a maximal length block $m$ is not traversed at the right end, and $t$ has just two blocks). This gives linear look-ahead.
\end{proof}

\section{Embeddings and closure under commensurability}

RAAGs together with closure under passing to finite-index supergroups gives many interesting corollaries, in particular finitely-generated Coxeter groups and surface groups \cite{BeBlMa20}. While closure under commensurability can be shown directly, we extract it from the following stronger theorem. See \cite{LiMa95} for a basic reference on symbolic dynamics.

\begin{theorem}
\label{thm:EmbeddingsBetween}
Let $X$ be a nonwandering sofic shift and $Y$ an uncountable sofic shift. Then $\llb X \rrb \leq \llb Y \rrb$. The embedding preserves the look-ahead function up to a multiplicative constant.
\end{theorem}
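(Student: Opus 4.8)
The plan is to split the embedding into two independent halves, isolating the roles of the two hypotheses: I would show (A) $\llb \Sigma^\Z \rrb \le \llb Y \rrb$ for every uncountable sofic $Y$, and (B) $\llb X \rrb \le \llb \Sigma^\Z \rrb$ for every nonwandering sofic $X$, both with look-ahead preserved up to a multiplicative constant. Composing these and tracking constants gives the theorem. Both halves run the conveyor-belt/simulation machinery of Theorem~\ref{thm:GraphProducts} with a single ``color'': the source group acts by reading a simulated configuration off the belt containing the origin and moving the head along that belt by the source cocycle. The genuinely new work is (A) manufacturing a conveyor-belt coding alphabet inside an arbitrary positive-entropy sofic shift, and (B) guaranteeing that the configurations drawn on belts are honest points of the (sofic, hence locally constrained) source.

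For (A), I would use that an uncountable sofic shift has positive entropy, so a presenting automaton has a strongly connected component carrying two distinct cycles through a common state; taking suitable powers yields two words $w_0 \ne w_1 \in \mathcal{L}(Y)$ of equal length that concatenate freely inside that component. From $\{w_0,w_1\}$ I would build a supply of mutually unbordered, data-carrying block words $w_{a,b,v} \in \mathcal{L}(Y)$ (encoding $a$ top cells, $b$ bottom cells and data $v$, exactly as in Theorem~\ref{thm:GraphProducts}), all freely concatenable within the chosen component and recognizable by a local rule against arbitrary $Y$-context. Conveyor belts are then maximal runs of such blocks, and on any $y$ with no recognizable blocks around the origin the image acts as the identity. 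Since the blocks and their concatenations extend to points of $Y$, every finite belt content is realizable, which gives faithfulness, and since each source symbol costs a bounded number of $Y$-coordinates, look-ahead changes only by a constant.

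For (B) the target is a full shift, so the coding alphabet is free, but the source $X$ is now sofic rather than a full shift, and here is the main subtlety. A fixed $f \in \llb X \rrb$ has continuous, hence bounded, cocycle $c$ depending on a bounded window, so the image $\Phi(f)$ moves the head only a bounded distance and is automatically continuous, shift-commuting and locally determined; the real issue is that a sequence read off a belt need not lie in $X$, so $c$ need not be defined there. I would resolve this by working through the SFT cover $G_X$: each belt cell carries an edge of $G_X$, and the local recognition rule extends a belt only through composable edges (in orbit order across the fold, which keeps consecutive-in-orbit cells at bounded $Y$-distance), so the simulated configuration along any belt is always a legal $G_X$-walk and projects into $X$. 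Finite belts are closed walks, hence periodic points of $X$, and moving the head by $c$ around such a cycle is an honest cyclic rotation, so $\Phi(f)$ is a homeomorphism with $\Phi(f)^{-1}=\Phi(f^{-1})$; the simulation identity then yields that $\Phi$ is a homomorphism.

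Faithfulness in (B) is where nonwandering enters. Using that a nonwandering sofic shift has dense periodic points, given nontrivial $f$ I would take the finite window witnessing $c \ne 0$, close up its lift in $G_X$ to a periodic point $x'$ with the same cocycle value $m \ne 0$, fold $x'$ onto a finite belt of length exceeding $|m|$, and seat the head so that $\Phi(f)$ rotates it by $m \not\equiv 0$ and thus acts nontrivially. The obstacle I expect to cost the most is (A): turning the mere positive entropy of an arbitrary sofic $Y$ into a concrete, locally recognizable, freely concatenable family of unbordered data blocks compatible with $Y$'s own transition constraints. In a full shift every word is available for free, whereas here one must do genuine symbolic-dynamics work to realize the coding inside the prescribed language while keeping the look-ahead blow-up bounded.
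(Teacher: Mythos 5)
Your factorization into (A) full shift $\hookrightarrow$ uncountable sofic $Y$ and (B) nonwandering sofic $X \hookrightarrow$ full shift is a legitimate reorganization, and your half (A) is essentially sound: it is the paper's use of a stock of mutually unbordered, freely concatenable words in $\mathcal{L}(Y)$ (cited there from \cite{Sa16a}; your positive-entropy/two-cycles derivation is the standard proof of that lemma). The paper does not split the embedding, but handles both sides in one construction, with one block word $w_a \in \mathcal{L}(Y)$ per symbol $a$ of a vertex-shift presentation of the source.

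The genuine gap is in (B), at finite conveyor belts. You assert that a local rule extending belts ``only through composable edges'' of the cover $G_X$ makes every finite belt a closed walk, hence a periodic point of $X$. Composability between adjacent blocks is indeed local, but the closing of the cycle at the two belt ends is an extra constraint on the end blocks' contents that can fail, and it cannot be repaired locally: if you drop an end block whose wrap does not compose, the neighboring block becomes an end and its wrap must now compose, so whether a given block is a belt end comes to depend on unboundedly distant data and the belt structure is no longer given by a continuous rule. The paper's construction is designed exactly to eliminate this: after decomposing the nonwandering $X$ into finitely many transitive sofic pieces (the full group preserves each) and passing to a transitive SFT cover conjugated to a vertex shift, every block carries, besides its single simulating symbol $a$, a fixed periodic word $t$ on the return track plus $r$ transition cells, and transitivity supplies a word $u$ with $|u| \le r$ and $tua \in \mathcal{L}(X)$, so \emph{every} wrap closes regardless of the block sequence; the only remaining check (adjacent subscripts legal) is genuinely local. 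Your proposal has no analogue of $t$, $r$, or the transition cells, and the fact that you invoke nonwandering only for faithfulness is symptomatic: faithfulness already follows from infinite belts alone (as in the paper), so if your (B) were otherwise sound it would embed $\llb X \rrb$ for \emph{every} sofic $X$, including wandering ones --- contradicting the remark immediately after the theorem that a wandering $X$ yields an infinite alternating subgroup while $\llb \Sigma^\Z \rrb$ is residually finite. Nonwandering must enter the belt construction itself, via the transitive decomposition and the bounded-length connecting words, and that is the idea your proof is missing.
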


\begin{proof}
Let us first massage $X$. We can write a nonwandering $X$ as a finite union $X = \bigcup_i X_i$ where $X_i$ are transitive sofic shifts (which may intersect nontrivially), and $\llb X \rrb$ of course fixes each $X_i$ and acts by a subgroup of $\llb X_i \rrb$. Thus, it is enough to embed $\llb X_i \rrb$ in $\llb Y \rrb$. Letting $Z_i$ be a transitive SFT cover of $X_i$, it is actually enough to embed $\llb Z_i \rrb$: $\llb X_i \rrb$ embeds in $\llb Z_i \rrb$ naturally by acting through the projection. Thus, we may assume $X$ is a transitive SFT in the first place.

We may then suppose that $X \subset \Sigma^\Z$ is a vertex shift by conjugating it to one (changing the alphabet if necessary). Let $r$ be such that any two symbols $a, b \in \Sigma$ that appear in $X$ can be joined by a word of length at most $r$, and pick such words $t_{a, b}$ for each $a, b \in \Sigma$. Pick also a periodic point $x = ...ttt.ttt... \in X$ with period $p$, $|t| = p$. 

Let $Y \subset B^*$. It is standard that we can find in $Y$ a large finite set of arbitrarily long mutually unbordered words $W \subset B^*$ which can be concatenated freely, see e.g.\ \cite{Sa16a} for a proof. Pick such a set $W = \{w_a \;|\;  \in \Sigma\}$, where each $w_a$ is of length at least $1+p+r$.

As in the proof of Theorem~\ref{thm:GraphProducts}, consider maximal runs of consecutive words from $W$, call them \emph{preconveyor belts} and call each occurrence of $w_a \in W$ a \emph{preblock}. We add some further rules: if the subscripts $a$ of the words $w_a$ spell out a forbidden word of $X$, then omit the preblocks included in such a forbidden word. Finally, we erase all conveyor belts of length one. The remaining preblocks are \emph{blocks}, and maximal runs of blocks form \emph{conveyor belts}.

We now name the positions of a block $I$ as follows: the first cell is the \emph{simulating cell}, the following $p$ cells are the \emph{periodic cells}, and the remaining $r$ cells are the \emph{transition cells}. As in the proof of Theorem~\ref{thm:GraphProducts}, we build a graph structure and describe a function $s$ that gives simulated symbols at the nodes, and the action of $\llb X \rrb$ is to act as if the graph drawn is a configuration, with symbols given by $s$.

The edges are as follows: in a conveyor belt, connect the simulating cells of blocks left-to-right, i.e.\ join the simulating cell of a block $I$ to the simulating cell of the block $J$ immediately to the right. Then connect the periodic cells right-to-left. At the left boundary of a conveyor belt, with block $w_a$, let $u$ be a maximally short and lexicographically minimal word such that $tua$ is in the language of $X$, and ``imagine'' that this word connects the periodic cells to the simulating cell, using the transition cells. Now, it is clear what $s$ should output: in the simulating cell of a block $w_a$ it should output $a$, in the periodic cells it outputs $t$ (in reverse), and in the transition cells (that are actually used), it outputs the letters of the transition word $u$. This construction is illustrated in Figure~\ref{fig:Embeddings}.

Clearly the simulated configurations on conveyor belts are ones in $X$, so we indeed have an action of $\llb X \rrb$. Using infinite conveyor belts, we see that the action is faithful.

\begin{figure}[h]
\begin{center}
\begin{tikzpicture}[scale=0.35,shorten >=1pt,shorten <=1pt]
\node () at (0,0) {0};
\node () at (1,0) {0};
\node () at (2,0) {0};
\node () at (3,0) {3};
\node () at (4,0) {2};
\node () at (5,0) {1};
\node () at (6,0) {0};
\node () at (7,0) {3};
\node () at (8,0) {2};
\node () at (9,0) {2};
\node () at (10,0) {0};
\node () at (11,0) {3};
\node () at (12,0) {2};
\node () at (13,0) {2};
\node () at (14,0) {0};
\node () at (15,0) {3};
\node () at (16,0) {2};
\node () at (17,0) {1};
\node () at (18,0) {0};
\node () at (19,0) {3};
\node () at (20,0) {2};
\node () at (21,0) {2};
\node () at (22,0) {0};
\node () at (23,0) {3};
\node () at (24,0) {2};
\node () at (25,0) {1};
\node () at (26,0) {0};
\node () at (27,0) {3};
\node () at (28,0) {2};
\node () at (29,0) {1};
\node () at (30,0) {0};
\node () at (31,0) {3};
\node () at (32,0) {2};
\node () at (33,0) {2};
\node () at (34,0) {0};
\node () at (35,0) {0};
\node () at (36,0) {0};
\node () at (37,0) {0};
%
\draw[thick] (14.5,2) -- (14.5,-7.5);
\draw[gray] (18.5,2) -- (18.5,-7.5);
\draw[gray] (22.5,2) -- (22.5,-7.5);
\draw[gray] (26.5,2) -- (26.5,-7.5);
\draw[gray] (30.5,2) -- (30.5,-7.5);
\draw[thick] (34.5,2) -- (34.5,-7.5);
\draw [decorate,decoration={brace,amplitude=10pt},xshift=0,yshift=0]
(2.5,0.8) -- (6.5,0.8) node [black,midway,yshift=0.6cm] 
{\footnotesize $w_0$};
\draw [decorate,decoration={brace,amplitude=10pt},xshift=0,yshift=0]
(6.5,0.8) -- (10.5,0.8) node [black,midway,yshift=0.6cm] 
{\footnotesize $w_1$};
\draw [decorate,decoration={brace,amplitude=10pt},xshift=0,yshift=0]
(10.5,0.8) -- (14.5,0.8) node [black,midway,yshift=0.6cm] 
{\footnotesize $w_1$};
\draw [decorate,decoration={brace,amplitude=10pt},xshift=0,yshift=0]
(14.5,0.8) -- (18.5,0.8) node [black,midway,yshift=0.6cm] 
{\footnotesize $w_0$};
\draw [decorate,decoration={brace,amplitude=10pt},xshift=0,yshift=0]
(18.5,0.8) -- (22.5,0.8) node [black,midway,yshift=0.6cm] 
{\footnotesize $w_1$};
\draw [decorate,decoration={brace,amplitude=10pt},xshift=0,yshift=0]
(22.5,0.8) -- (26.5,0.8) node [black,midway,yshift=0.6cm] 
{\footnotesize $w_0$};
\draw [decorate,decoration={brace,amplitude=10pt},xshift=0,yshift=0]
(26.5,0.8) -- (30.5,0.8) node [black,midway,yshift=0.6cm] 
{\footnotesize $w_0$};
\draw [decorate,decoration={brace,amplitude=10pt},xshift=0,yshift=0]
(30.5,0.8) -- (34.5,0.8) node [black,midway,yshift=0.6cm] 
{\footnotesize $w_1$};
\draw (4,2)--(5,3);
\draw (5,2)--(4,3);
\draw (8,2)--(9,3);
\draw (9,2)--(8,3);
\draw (12,2)--(13,3);
\draw (13,2)--(12,3);
%
\node[inner sep =0.2] (j) at (15,-2) {\footnotesize $0$};
\node[inner sep =0.2] (k) at (16,-3.4) {\footnotesize $0$};
\node[inner sep =0.2] (kk) at (17,-3.4) {\footnotesize $1$};
\node[circle,fill,inner sep =0] (l) at (18,-5) {\footnotesize $\cdot$};
\node[inner sep =0.2] (m) at (19,-2) {\footnotesize $1$};
\node[inner sep =0.2] (n) at (20,-3.5) {\footnotesize $0$};
\node[inner sep =0.2] (nn) at (21,-3.5) {\footnotesize $1$};
\node[circle,fill,inner sep =0] (o) at (22,-5) {\footnotesize $\cdot$};
\node[inner sep =0.2] (p) at (23,-2) {\footnotesize $0$};
\node[inner sep =0.2] (q) at (24,-3.5) {\footnotesize $0$};
\node[inner sep =0.2] (qq) at (25,-3.5) {\footnotesize $1$};
\node[circle,fill,inner sep =0] (r) at (26,-5) {\footnotesize $\cdot$};
\node[inner sep =0.2] (s) at (27,-2) {\footnotesize $0$};
\node[inner sep =0.2] (t) at (28,-3.5) {\footnotesize $0$};
\node[inner sep =0.2] (tt) at (29,-3.5) {\footnotesize $1$};
\node[circle,fill,inner sep =0] (u) at (30,-5) {\footnotesize $\cdot$};
\node[inner sep =0.2] (v) at (31,-2) {\footnotesize $1$};
\node[inner sep =0.2] (w) at (32,-3.5) {\footnotesize $0$};
\node[inner sep =0.2] (ww) at (33,-3.5) {\footnotesize $1$};
\node[inner sep =0.2] (x) at (34,-5) {\footnotesize 0};
\draw[->] (j) to[bend left=8] (m);
\draw[->] (m) to[bend left=8] (p);
\draw[->] (p) to[bend left=8] (s);
\draw[->] (s) to[bend left=8] (v);
\draw[->] (ww) to[bend left=80,looseness=2] (w);
\draw[->] (w) to[bend left=8] (tt);
\draw[->] (tt) to[bend left=80,looseness=2] (t);
\draw[->] (t) to[bend left=8] (qq);
\draw[->] (qq) to[bend left=80,looseness=2] (q);
\draw[->] (q) to[bend left=8] (nn);
\draw[->] (nn) to[bend left=80,looseness=2] (n);
\draw[->] (n) to[bend left=8] (kk);
\draw[->] (kk) to[bend left=80,looseness=2] (k);
\draw[->] (v) to[bend left=40,looseness=1.5] (x);
\draw[->] (x) to[bend left=30,looseness=1] (ww);
\draw[->] (k) to[bend left=30,looseness=1] (j);
%
\end{tikzpicture}
\end{center}
\caption{The nodes and connections described in the proof of Theorem~\ref{thm:EmbeddingsBetween}, where $Y = \{0,1,2\}^\Z$, $X \subset \{0,1\}^\Z$ is the golden mean shift with a single forbidden word $11$ so $r = 1$, $w_0 = 3210, w_1 = 3220$, and $x = ...0101.0101...$ so $p = 2$. We use a different vertical offset for the simulating cell, the periodic cells and the transition cell, for a clearer drawing. Numbers in the cells indicate the simulating symbols. A forbidden word, and a resulting isolated block have been eliminated. Self-loops and nodes with only self-loops are omitted, except for unused transition cells which are shown with black dots.}
\label{fig:Embeddings}
\end{figure}
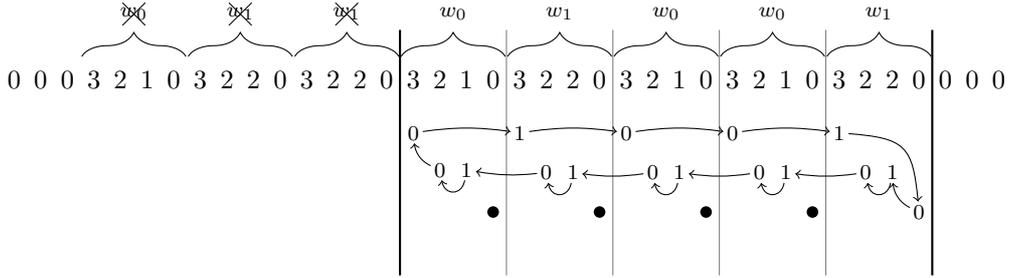

The claim about the look-ahead function is clear; we simply need enough overhead in the look-ahead function to parse the conveyor belt structure.
\end{proof}

The theorem is optimal in some (but not all) ways: If $Y$ is countable, then $\llb Y \rrb$ is elementary amenable (this is well-known, one proof is in \cite{SaSc16a}), thus cannot contain $\llb \Sigma^\Z \rrb$ for any alphabet $\Sigma$ (thus cannot contain $\llb X \rrb$ for any uncountable sofic shift $X$ by the above theorem). If $X$ is a wandering subshift, it is easy to see that it contains a copy of the infinite alternating group of $\N$ (by permuting positions around a wandering clopen set), and thus cannot embed in $\llb Y \rrb$ for any subshift with dense periodic points (such as a nonwandering sofic shift) $Y$, since the topological full group of such $Y$ is residually finite. This does not deal with cases where $X$ and $Y$ are both wandering, and we suspect that the classification of such pairs is more difficult.

The corresponding embeddability problem for automorphism groups of subshifts $\Aut(X)$ is open, even when $X$ and $Y$ are mixing SFTs, with the expection of the case where $X$ is a full shift (and some specially crafted examples). Embeddability of full shifts is shown in \cite{KiRo90,Sa16a}.

\begin{corollary}
If $G$ is commensurable to $H$ and $H \in \mathcal{G}$, then $G \in \mathcal{G}$. The look-ahead of $H$ differs from that of $G$ by an multiplicative constant.
\end{corollary}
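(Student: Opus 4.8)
The plan is to split commensurability into its two constituent moves and handle them separately. First, $\mathcal{G}$ is trivially closed under subgroups: the look-ahead (equivalently plook-ahead) condition of Definition~\ref{def:lookahead} is quantified over individual group elements, so restricting an embedding $H \hookrightarrow \llb\Sigma^\Z\rrb$ with linear look-ahead to any subgroup $H_0 \leq H$ gives an embedding with the same look-ahead function. Thus if $G$ and $H$ are commensurable with common finite-index subgroups $G_0 \cong H_0$ and $H \in \mathcal{G}$, then $H_0 \in \mathcal{G}$ and hence $G_0 \in \mathcal{G}$. It remains to pass from $G_0 \in \mathcal{G}$ to its finite-index overgroup $G$; this is the whole content.

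For the overgroup step I would first reduce to a normal subgroup, replacing $G_0$ by its normal core $N = \bigcap_{g\in G} g G_0 g^{-1}$, which is normal of finite index in $G$ and contained in $G_0$, so $N \in \mathcal{G}$ by subgroup closure. It then suffices to show $G \in \mathcal{G}$ given $N \trianglelefteq G$ with $Q := G/N$ finite and $N \in \mathcal{G}$. By the Kaloujnine--Krasner embedding $G \hookrightarrow N \wr Q = N^Q \rtimes Q$ (with $Q$ acting on $N^Q$ by its regular permutation of the $m := |Q|$ coordinates), so it is enough to realise $N \wr Q$; equivalently, to realise the induced action of $G$ on $m$ synchronised copies of the faithful $N$-system.

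The realisation is by the conveyor-belt method of Theorem~\ref{thm:GraphProducts}, but producing a subshift $Z$ rather than a full shift. Fix $N \leq \llb X \rrb$, $X = \Sigma^\Z$. I would build $Z$ whose conveyor belts are grouped into super-blocks of $m$ tracks, each track carrying an independent simulated $X$-configuration together with a locally readable index $i \in Q$. The group $N^Q$ then acts track-wise, exactly as in the direct-product (complete-graph) case of Theorem~\ref{thm:GraphProducts}: the $i$-th copy of $N$ applies its cocycle only when the head reads index $i$, and is the identity otherwise. The group $Q$ acts by permuting the $m$ tracks according to its regular action: the element realising $q$ reads the current index $i$ and shifts the head to the matching offset in track $qi$. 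The key point -- and the reason this does not collapse to a cyclic group -- is that a topological full group element is $x \mapsto \sigma^{c(x)}(x)$ with $c$ varying from cell to cell, so an arbitrary permutation of the $m$ tracks is realisable by a cocycle whose value depends on the locally read index, the resulting map being a homeomorphism precisely because the permutation is a bijection. Checking that the $\hat q$ reproduce the multiplication of $Q$ and conjugate the track-wise $N$-action correctly gives a faithful copy of $N \wr Q$, hence of $G$, in $\llb Z \rrb$. Taking $Z$ to have dense periodic points makes it a nonwandering sofic shift, so Theorem~\ref{thm:EmbeddingsBetween} yields $\llb Z \rrb \leq \llb \Sigma'^\Z \rrb$ with look-ahead preserved up to a multiplicative constant; since the construction's per-step overhead is bounded by the super-block length (comparable to the shift amount), we conclude $G \in \mathcal{G}$ with look-ahead differing from that of $N$, hence of $H$, by a multiplicative constant.

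The main obstacle I expect is the bookkeeping that makes the finite group $Q$ act by genuine shift-cocycles interacting correctly with the per-track $N$-simulation: the $N$-simulation of Theorem~\ref{thm:GraphProducts} uses conveyor belts of variable length, whereas realising a track permutation as one offset-preserving jump wants the $m$ tracks of a super-block comparable. Reconciling these -- for instance by separating a unit-size ``pointer'' cell per track (the object $Q$ permutes) from the variable-length $N$-data it indexes, and then verifying that the induced map $G \to \llb Z \rrb$ is well defined, shift-commuting, continuous and faithful, and that $Z$ is genuinely sofic and nonwandering -- is where the real work lies; the closing appeal to Theorem~\ref{thm:EmbeddingsBetween} is then routine.
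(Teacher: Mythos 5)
Your outer skeleton matches the paper's: reduce by subgroup closure to the finite-index supergroup step, embed $G$ into a wreath product of a finite-index subgroup with a finite group, realize that wreath product in the topological full group of an auxiliary nonwandering sofic shift, and finish by Theorem~\ref{thm:EmbeddingsBetween}. Where you diverge is in the two middle moves, and in both places the paper is leaner. First, your detour through the normal core is unnecessary: the induced representation embeds $G$ into $H_0 \wr S_k$ for \emph{any} index-$k$ subgroup $H_0 \leq G$, normal or not (send $g$ to the permutation of coset representatives together with the $H_0$-valued corrections $t_{\pi(i)}^{-1} g t_i$); this is the form the paper cites, and passing to the core only inflates the index. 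Second, and more substantially, the paper does not build the auxiliary subshift by conveyor belts at all. It takes $Z = \sqrt[k]{X}$, the diluted subshift whose configurations carry an $X$-configuration on one residue class mod $k$ and $\#$ elsewhere. Your $m$ ``tracks'' are then the $k$ phase classes of a \emph{single} configuration: the phase is locally readable (any window of length $k$ meeting a non-$\#$ symbol determines it), each copy of $\llb X \rrb$ acts on its phase class through $\sigma^k$, and a permutation $\pi \in S_k$ is realized by the cocycle that shifts a phase-$j$ point by the constant $\pi(j) - j$. This gives $\llb X \rrb \wr S_k \leq \llb \sqrt[k]{X} \rrb$ essentially for free, and the difficulty you correctly flag as the crux of your plan --- synchronising variable-length per-track belts with an offset-preserving permutation jump, and verifying soficity, continuity and faithfulness of the result --- simply never arises, because the tracks are interleaved positions of one point rather than separately encoded belts (belts reappear only inside the proof of Theorem~\ref{thm:EmbeddingsBetween}, which you invoke identically). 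The look-ahead bookkeeping also becomes transparent in the paper's version: dilution multiplies cocycle values by $k$ and Theorem~\ref{thm:EmbeddingsBetween} loses only a multiplicative constant. So your proposal is a correct plan in outline, but its hardest component is left as a sketch, and it is exactly the component the suspension trick eliminates; if you do want to complete your version, the pointer-cell idea would in effect be re-deriving $\sqrt[k]{X}$ inside a belt alphabet.
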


\begin{proof}
It is enough to show this for $G$ a supergroup of $H$. If $X \subset \Sigma^\Z$ is a subshift and $\# \notin \Sigma$ a symbol not in its alphabet, write $\sqrt[k]{X}$ for the smallest subshift containing all $x \in (\Sigma \cup \{\#\})^\Z$ such that 
\[ (x_{ki})_i \in X \wedge \forall i: \forall j \in [1, k-1]: x_{ki + j} = \#. \]
This is just a suspension of $X$ over a finite cycle, meaning the dynamics is a $k$th root of the diagonal dynamics on a disjoint union of $k$ copies of $X$. Clearly the wreath product $\llb X \rrb \wr S_k$ embeds naturally in $\llb \sqrt[k]{X} \rrb$, and thus any index-$k$ supergroup of $\llb X \rrb$ embeds in $\llb \sqrt[k]{X} \rrb$ by induced representation \cite{KiRo90}. Of course $\llb \sqrt[k]{X} \rrb$ is nonwandering so by the previous theorem we have closure under commensurability.
\end{proof}

As explained in the journal version of \cite{BaKaSa16}, $\llb \sqrt[k]{\Sigma^\Z} \rrb$ is naturally isomorphic to the group of $k$-headed finite-state automata $\mathrm{RFA}(|\Sigma|,k)$ defined in \cite{BaKaSa16}.

\section{Lamplighter groups}

Theorem~\ref{thm:GraphProducts} would be more interesting if we had more exotic examples of groups acting with linear look-ahead. We suspect that there are groups that act with linear look-ahead and cannot be constructed from scratch with virtual extensions and graph products, but we have no candidates.

We also suspect that some groups act with only non-linear look-ahead. For this, we suggest the following simple candidate:

\begin{conjecture}
The lamplighter group $\Z_2 \wr \Z$ does not embed in the topological full group of a full shift with linear look-ahead.
\end{conjecture}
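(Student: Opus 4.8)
The plan is to argue by contradiction, first replacing the hypothesis ``linear look-ahead'' by the far more rigid ``bounded look-ahead.'' By the theorem equating $\mathcal{G}_{\mathrm{lin}}$ with $\mathcal{G}_{\mathrm{bnd}}$, the conjecture is equivalent to $\Z_2 \wr \Z \notin \mathcal{G}_{\mathrm{bnd}}$, so I would suppose we are given an embedding $\phi : \Z_2 \wr \Z \hookrightarrow \llb \Sigma^\Z \rrb$ admitting a single constant $C$ such that, for every $g$ with cocycle $c$, there is a cylinder $[w]_{-n}$ ($w \in \Sigma^{2n+1}$) on which $c$ is constant equal to some $m$ with $|m| \geq n - C$. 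Write $t, a$ for the standard generators, $T = \phi(t)$, $A = \phi(a)$, and $A_i = T^i A T^{-i} = \phi(a_i)$, where $a_i = t^i a t^{-i}$ is the lamp-flip at position $i$. These images satisfy the lamplighter relations internally: each $A_i$ is an involution, they commute pairwise, and conjugation by $T$ shifts the index, $T A_i T^{-1} = A_{i+1}$.

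The goal is then to exhibit a sequence of group elements whose images must shift some configuration by an amount $m$ while provably requiring a window of radius $\gg |m| + C$, violating the uniform bound. The natural candidates have small net displacement (forcing $|m|$ to be small) but ``long-range'' dependence on the lamp coordinates (forcing the window to be large). Concretely I would study the pure lamp configurations $J_S := \phi\big(\prod_{i\in S} a_i\big)$ for $S \subseteq \{0,\dots,N-1\}$: these are $2^N$ distinct, pairwise commuting involutions lying in the kernel of $\Z_2 \wr \Z \to \Z$, forming a faithfully embedded copy of $(\Z_2)^N$. The quantities to control are the \emph{displacement} $\mathrm{disp}(f) = \max_x |c_f(x)|$ and the radius of $\phi$ on these elements. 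I would aim for an information-theoretic lower bound: since $\phi$ is faithful, distinguishing the $2^N$ involutions $J_S$, together with all their shift-conjugates $T^{-j} J_S T^{j} = \phi\big(\prod_{i \in S} a_{i-j}\big)$, should force the cocycles to read windows whose size grows with $N$; combined with bounded look-ahead (which caps windows at $|m|+C$) this should force $\mathrm{disp}(J_S) \to \infty$ and, with more work, a window/displacement gap tending to infinity.

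The technical heart would be a \emph{locality lemma} converting the algebra into geometry. Writing $S_i = \{x : A_i(x) \neq x\}$ for the clopen support of $A_i$, one has $S_i = T^i(S_0)$, so the supports form a single $T$-orbit of clopen sets; the commutation $A_iA_j = A_jA_i$ of involutions should then be leveraged to control how these supports overlap and how $A_i$ acts on the overlaps. I would try to show that bounded look-ahead together with these relations forces the $A_i$ to behave, on large windows, like translates of one finite-range rule, and then count: uniformly bounded radius and displacement across $\{A_i\}$ would leave only finitely many possible local behaviours, contradicting the faithful embedding of the infinite group $(\Z_2)^{(\Z)}$; hence the radius must grow, and the remaining task is to upgrade ``growing radius with bounded displacement'' into an explicit violation of $|m| \geq n - C$.

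The main obstacle -- and the reason this is only a conjecture -- is that $T = \phi(t)$ need not be the shift, nor conjugate to a power of it, so the clean heuristics ``supports are translates'' and ``finitely many local rules'' do not transfer: conjugation by $T$ can inflate both the radius and the displacement of a cocycle uncontrollably, and commuting elements of $\llb \Sigma^\Z\rrb$ need not have geometrically related supports at all. Thus the genuine difficulty is to derive \emph{any} quantitative look-ahead lower bound from the lamplighter relations without first normalizing $T$. The authors explicitly note that they cannot even exclude a linear-look-ahead self-embedding of $\llb \Sigma^\Z\rrb$, which is exactly the kind of wild re-encoding of $T$ this argument must rule out; surmounting it would likely require either a new conjugacy-invariant measuring the ``look-ahead cost'' of a subgroup, or a growth/amenability obstruction sensitive to the exponential growth of $\Z_2\wr\Z$ that distinguishes it from the polynomial-growth abelian RAAGs (such as $\Z^2$) that do lie in $\mathcal{G}$.
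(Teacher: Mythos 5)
You have not produced a proof, and indeed no proof exists to compare against: the statement you were given is posed in the paper as an open \emph{conjecture}, and the author explicitly remarks that they are far from proving it --- they cannot even rule out an abstract self-embedding of $\llb X \rrb$ with linear look-ahead. Your proposal is honest about this, and its framing is sound as far as it goes: the reduction from linear to bounded look-ahead via the robustness theorem ($\mathcal{G}_{\mathrm{lin}} = \mathcal{G}_{\mathrm{bnd}}$) is legitimate, the choice of the lamp involutions $J_S$ as the elements that should exhibit a window/displacement gap is the natural one, and your closing diagnosis --- that the image $T = \phi(t)$ need not be conjugate to a shift power, so supports of the $A_i$ need not be translates of one another and no uniform radius control is available --- is precisely the obstruction that makes the problem hard. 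Your counting observation is also correct in isolation: elements of $\llb \Sigma^\Z \rrb$ with uniformly bounded radius and displacement form a finite set, so the infinite family $\{A_i\}$ cannot have both bounded.

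The one genuine technical error worth flagging, beyond the admitted incompleteness, is your gloss ``bounded look-ahead (which caps windows at $|m|+C$).'' The definition is existential, not universal: it asserts that for each $g$ there is \emph{some} cylinder $[w]_{-n}$ with constant cocycle value $m$ and $n \leq |m| + C$; it places no cap on the radius of the cocycle elsewhere, and elements with bounded look-ahead may perfectly well have cocycles depending on enormous windows on most of the space. Consequently, to refute bounded look-ahead for a given element you must show that \emph{every} cylinder on which its cocycle is constant $m$ has radius $n > |m| + C$ --- a quantifier over all witnesses, which your information-theoretic sketch does not engage with: distinguishing the $2^N$ involutions forces large radii somewhere, but says nothing against the existence of a single efficient witness cylinder per element. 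This is in addition to the unproven ``locality lemma,'' which, as you note, cannot be established without first normalizing $T$; commuting elements of a topological full group need not have geometrically related supports, so the lamplighter relations alone do not yield the geometry your plan requires. In short: the proposal is a reasonable research program that correctly identifies why the conjecture is open, but it proves nothing, and its central counting step misstates what the look-ahead hypothesis provides.
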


It is not difficult to find an embedding of $\Z_2 \wr \Z$ with non-linear look-ahead. We prove a bit more, and study conditions under which wreath products $A \wr G$ are in $\mathcal{G}$.

\begin{definition}
If $A$ is a finite abelian group, we say an action of $G \leq \llb X \rrb$ is \emph{move-$A$ithful} if the following hold: Let $c_g$ be the cocycle of $g \in G$. Then for every mapping $\beta : G \to A$ with finite nonempty support, there exists $x \in \Sigma^\Z$ and $\gamma : \Z \to \End(A)$ such that $\sum_{g \in G} \gamma(c_g(x))(\beta(g)) \neq 0_A$.
\end{definition}

Move-$A$ithfulness roughly corresponds to $A$ithfulness as defined in \cite{Sa20}; this latter property applies to any action on a zero-dimensional space, in particular to topological full groups. We omit the discussion of the precise connection of the two notions.


\begin{definition}
Let $G \in \mathcal{G}$. We say an action of $G \leq \llb X \rrb$ has \emph{unique moves} if for every $\emptyset \neq F \Subset G$, there exists $g \in F$ with cocycle $c$, and a point $x$ such that $c'(x) \neq c(x)$ whenever $c'$ is the cocycle of an element $h \in G \setminus \{g\}$.
\end{definition}

Unique moves roughly correspond to strong faithfulness as defined in \cite{Sa20}. The following result is analogous to Lemma~3 in \cite{Sa20} and we omit the proof. The idea is simply to use the identity endomorphism at the position reached by a unique element of the support, and the zero endomorphism elsewhere.

\begin{lemma}
An action with unique moves is move-$A$ithful for any finite abelian $A$.
\end{lemma}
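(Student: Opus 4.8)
The plan is to derive move-$A$ithfulness from the unique moves property by making a careful choice of the endomorphism function $\gamma : \Z \to \End(A)$ for each given $\beta : G \to A$ with finite nonempty support. The key observation is that move-$A$ithfulness requires, for each such $\beta$, the existence of a configuration $x$ and a function $\gamma$ making the sum $\sum_{g \in G} \gamma(c_g(x))(\beta(g))$ nonzero, whereas unique moves guarantees that within any finite subset of $G$ we can isolate one element whose cocycle value at some point is distinct from all others. The strategy is to apply the unique moves hypothesis to the support of $\beta$.

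\medskip

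First I would let $F = \mathrm{supp}(\beta) \Subset G$, which is nonempty by assumption. Applying the unique moves property to this finite set $F$, I obtain an element $g_0 \in F$ with cocycle $c = c_{g_0}$ and a point $x \in \Sigma^\Z$ such that $c_h(x) \neq c(x)$ for every $h \in G \setminus \{g_0\}$. Set $k = c(x) = c_{g_0}(x)$. The point is that $g_0$ occupies the value $k$ uniquely among \emph{all} elements of $G$ at the configuration $x$, not merely among elements of $F$. I would then define $\gamma : \Z \to \End(A)$ by setting $\gamma(k) = \ID_A$ (the identity endomorphism) and $\gamma(n) = 0$ (the zero endomorphism) for every $n \neq k$. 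With this choice, every summand $\gamma(c_g(x))(\beta(g))$ vanishes except the one coming from $g = g_0$, since no other $g \in G$ has $c_g(x) = k$. Hence the sum collapses to $\gamma(c(x))(\beta(g_0)) = \ID_A(\beta(g_0)) = \beta(g_0)$.

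\medskip

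It remains to check that $\beta(g_0) \neq 0_A$, and this is exactly where the finiteness and nonemptiness of the support of $\beta$ is used: since $g_0 \in F = \mathrm{supp}(\beta)$, by definition $\beta(g_0) \neq 0_A$. Therefore $\sum_{g \in G} \gamma(c_g(x))(\beta(g)) = \beta(g_0) \neq 0_A$, which is precisely the conclusion required by the definition of move-$A$ithfulness. Since $A$ was an arbitrary finite abelian group and the construction of $\gamma$ did not depend on the structure of $A$ beyond having a zero and an identity endomorphism, the argument works uniformly for all finite abelian $A$.

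\medskip

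I do not anticipate a serious obstacle here; the proof is essentially a bookkeeping argument, which matches the excerpt's remark that the idea is ``to use the identity endomorphism at the position reached by a unique element of the support, and the zero endomorphism elsewhere.'' The only subtle point worth stating explicitly is that unique moves isolates $g_0$ against \emph{all} of $G \setminus \{g_0\}$ rather than just against $F \setminus \{g_0\}$, which is what guarantees that the sum over the full group $G$ (and not just over the support) reduces to the single term $\beta(g_0)$.
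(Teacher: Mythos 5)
Your proof is correct and is exactly the argument the paper intends: the paper omits the proof but states the idea (identity endomorphism at the position reached by a unique element of the support, zero endomorphism elsewhere), and your write-up fleshes out precisely that, including the one genuinely necessary observation that unique moves isolates $g_0$ against all of $G \setminus \{g_0\}$, not just against $\mathrm{supp}(\beta) \setminus \{g_0\}$.
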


\begin{theorem}
\label{thm:LamplighterEmbedding}
Let $A$ be a nontrivial finite abelian group, $G \leq \llb X \rrb$ act move-$A$ithfully, and $\Sigma$ be an alphabet. Then $A \wr G \leq \llb \Sigma^\Z \rrb$.
\end{theorem}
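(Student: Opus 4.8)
The plan is to realize $A \wr G$ by enriching the conveyor-belt simulation of Theorem~\ref{thm:GraphProducts} with an $A$-valued ``lamp'' at every cell, where the top group $G$ acts by moving the head along the belts (exactly as in the simulation operation $\frac{g}{f,s}$) and the base group $\bigoplus_G A$ acts by rotating lamps. Since topological full group elements are literal shifts and cannot change the content of a configuration in place, the essential device is to encode each lamp as a small finite \emph{cycle}: writing $A \cong \Z_{n_1} \times \cdots \times \Z_{n_k}$, we attach to each cell a gadget made of cycles of lengths $n_1,\dots,n_k$ and declare the lamp value to be the tuple of rotation states. Rotating such a cycle by one step cyclically permutes the contents of finitely many cells, which is a bona fide element of $\llb\Theta^\Z\rrb$ (identity outside a bounded window, with continuous cocycle). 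It therefore suffices to build the action on one convenient alphabet $\Theta$, since Theorem~\ref{thm:EmbeddingsBetween} then gives $\llb\Theta^\Z\rrb \leq \llb\Sigma^\Z\rrb$ for any $\Sigma$.

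Concretely, I would first record that the move-$A$ithfulness hypothesis is equivalent to its indicator form: the existence of $\gamma : \Z \to \End(A)$ with $\sum_g \gamma(c_g(x))(\beta(g)) \neq 0_A$ is equivalent to the existence of a single $j \in \Z$ with $\sum_{g : c_g(x) = j} \beta(g) \neq 0_A$, since if every fibre-sum vanished then linearity of each $\gamma(j)$ would force the total sum to vanish. Next I set up the generators. For $h \in G$ I let $\hat h = \frac{h}{f,s}$ be the simulated action from Theorem~\ref{thm:GraphProducts}, acting on belts that additionally carry the lamp gadgets; the lamps are transported rigidly as the head moves, and since the $G$-action is faithful and lamp data never identifies distinct head positions, $h \mapsto \hat h$ is a faithful homomorphism of $G$. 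For each $a \in A$ I let $\widehat{\delta^a_e}$ be the element that locates the lamp gadget of the block currently containing the head and rotates it (componentwise) by $a$, fixing everything else; because $e \in G$ has cocycle $c_e \equiv 0$, this is exactly ``add $a$ to the lamp at the head.''

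I would then define $\widehat{\delta^a_g} := \hat g\, \widehat{\delta^a_e}\, \hat g^{-1}$, which rotates by $a$ the lamp of the block reached by applying $g$, i.e.\ at relative position $c_g$, realizing the conjugation identity $g \cdot \delta^a_e = \delta^a_g$ of $A \wr G$. That the assignment extends to a homomorphism $A \wr G \to \llb\Theta^\Z\rrb$ is routine: the $\hat h$ satisfy the relations of $G$, the $\widehat{\delta^a_e}$ compose additively (composition of cyclic rotations), and any two base elements $\widehat{\delta^a_g}, \widehat{\delta^b_{g'}}$ commute because on each configuration they rotate either disjoint lamp gadgets or a common one by commuting amounts (as $A$ is abelian). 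For faithfulness, an element $(\beta,h)$ with $h \neq e$ already acts nontrivially through its $G$-part, which changes the head's orbit position, a coordinate untouched by lamp rotations; it remains to treat $(\beta,e)$ with $\beta \neq 0$. Here the net effect rotates the lamp of block $j$ by $\sum_{g : c_g(\pi(x)) = j}\beta(g)$, where $\pi(x)$ is the simulated configuration, and the indicator form of move-$A$ithfulness produces (via an explicitly constructed finite belt carrying the witnessing simulated point, exactly as in Theorem~\ref{thm:GraphProducts}) a configuration on which some such sum is nonzero, hence a genuinely moved lamp.

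The main obstacle is precisely this last point---the faithfulness of the base group $\bigoplus_G A$---and it is exactly what the move-$A$ithful hypothesis is engineered to supply. The surrounding work of encoding lamps as cycles so that their modification is a legal shift, and of parsing the belt structure by a continuous shift-commuting rule, is the same careful-but-routine bookkeeping already carried out in the proof of Theorem~\ref{thm:GraphProducts}.
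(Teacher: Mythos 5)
There is a fatal gap at the heart of your construction: the base-group generators $\widehat{\delta^a_e}$ you propose are not elements of the topological full group. You correctly note that elements of $\llb\Theta^\Z\rrb$ are literal shifts, $f(x)=\sigma^{c(x)}(x)$, but your fix --- encoding each lamp as a small cycle of cells and ``rotating'' it, i.e.\ cyclically permuting the contents of finitely many cells while fixing everything else --- still rewrites the configuration. For an aperiodic $x$, the rotated configuration is not a shift of $x$, so no cocycle exists; ``identity outside a bounded window, with continuous cocycle'' is not a meaningful shape for an element of $\llb\Theta^\Z\rrb$ at all (it describes elements of $2$V or of Turing-machine groups such as $\mathrm{RTM}(2,1)$, where tape rewriting is permitted --- and this distinction is exactly what makes the theorem nontrivial). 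Consequently your homomorphism is defined on generators that do not exist in the target group, and everything downstream (the conjugation identity $\widehat{\delta^a_g}=\hat g\,\widehat{\delta^a_e}\,\hat g^{-1}$, commutation of base elements, faithfulness via the indicator form) has nothing to act on.

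The paper's proof avoids ever modifying the configuration: the lamp data is \emph{frozen} into $x$ (blocks are marked $u$/$v$ in the $\Z_2\wr\Z$ case, or decorated with endomorphisms $\phi\in\End(A)$ in general), and the only mutable state is the position of the head \emph{within an $|A|$-tuple of paired cells} carried rigidly along the conveyor belt. The generator for $h\in A$ does not toggle a remote lamp; it displaces the head inside its current tuple by $\phi(h)$, where $\phi$ is the frozen label of the current block, so the net effect of a base element $\beta$ on the single $A$-state is exactly $\sum_{g}\gamma(c_g(x))(\beta(g))$ --- which is why move-$A$ithfulness is stated with an arbitrary $\gamma:\Z\to\End(A)$ rather than in your indicator form. (Your equivalence of the two forms is correct, and it is essentially the same computation as the paper's remark that unique moves imply move-$A$ithfulness; but in the paper's architecture the general $\gamma$ is what actually gets realized by the choice of decorations on a configuration, not a normalization to be discarded.) Commutation of the base copies is then a genuine verification --- the rigid-pairing argument showing $[a,a^{t^i}]=\ID$, since distinct conjugates act on the \emph{same} head coordinate --- not the triviality of disjoint supports you claim. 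If you want to salvage the ``movable state'' intuition, the paper's remark before its proof points the way: pass to $\sqrt[4]{\{0,1\}^\Z}$, whose topological full group is a multi-headed automaton group in which relative head offsets can store state, and then apply Theorem~\ref{thm:EmbeddingsBetween}; inside $\llb\Sigma^\Z\rrb$ itself, the head position is the only register you have.
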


The following proof could be simplified by using $\sqrt[4]{\{0,1\}^\Z}$ and applying Theorem~\ref{thm:EmbeddingsBetween} instead of explicit conveyor belts, but we prefer to give a concrete direct construction for the basic lamplighter group $\Z_2 \wr \Z$.

\begin{proof}
We give the proof first for $A = \Z_2$ and $G = \Z$, and explain the modifications for the general result. By Theorem~\ref{thm:GraphProducts}, it is enough to show the result for $\Sigma = \{0,1,2,3\}$. Let $t$ be the generator for $\Z$ and $a$ the involution. Define $u = 3210$, $v = 3220$. As in the proof of Theorem~\ref{thm:GraphProducts}, to a configuration $x \in \Sigma^\Z$ we associate a graph with a node at each $n \in \Z$, and edges labeled with $t$ and $a$. Nodes that are not part of an occurrence of $u$ or $v$ are fixed by $t$ and $a$.

Nodes in maximal runs of words $w \in \{u, v\}$ are connected according to Figure~\ref{fig:LamplighterConveyors}. In words, if $I$ and $J$ are consecutive blocks containing words from $\{u, v\}$, add a $t$-edge from the first and second positions of $I$ to the respective positions of $J$, and add $t$-edges from the third and fourth position of $J$ to the respective positions of $I$. At the ends, wrap up the connections. Finally, inside every occurrence of $v$ at some interval $I$, add an $a$-edge between the first and second position.

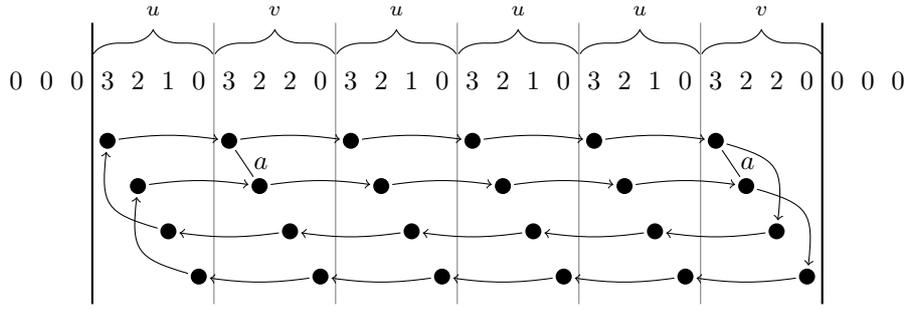
\begin{figure}
\begin{center}
\begin{tikzpicture}[scale=0.4,shorten >=1pt,shorten <=1pt]
\node () at (0,0) {0};
\node () at (1,0) {0};
\node () at (2,0) {0};
\node () at (3,0) {3};
\node () at (4,0) {2};
\node () at (5,0) {1};
\node () at (6,0) {0};
\node () at (7,0) {3};
\node () at (8,0) {2};
\node () at (9,0) {2};
\node () at (10,0) {0};
\node () at (11,0) {3};
\node () at (12,0) {2};
\node () at (13,0) {1};
\node () at (14,0) {0};
\node () at (15,0) {3};
\node () at (16,0) {2};
\node () at (17,0) {1};
\node () at (18,0) {0};
\node () at (19,0) {3};
\node () at (20,0) {2};
\node () at (21,0) {1};
\node () at (22,0) {0};
\node () at (23,0) {3};
\node () at (24,0) {2};
\node () at (25,0) {2};
\node () at (26,0) {0};
\node () at (27,0) {0};
\node () at (28,0) {0};
\node () at (29,0) {0};
\draw[thick] (2.5,2) -- (2.5,-7.5);
\draw[gray] (6.5,2) -- (6.5,-7.5);
\draw[gray] (10.5,2) -- (10.5,-7.5);
\draw[gray] (14.5,2) -- (14.5,-7.5);
\draw[gray] (18.5,2) -- (18.5,-7.5);
\draw[gray] (22.5,2) -- (22.5,-7.5);
\draw[thick] (26.5,2) -- (26.5,-7.5);
\draw [decorate,decoration={brace,amplitude=10pt},xshift=0,yshift=0]
(2.5,0.8) -- (6.5,0.8) node [black,midway,yshift=0.6cm] 
{\footnotesize $u$};
\draw [decorate,decoration={brace,amplitude=10pt},xshift=0,yshift=0]
(6.5,0.8) -- (10.5,0.8) node [black,midway,yshift=0.6cm] 
{\footnotesize $v$};
\draw [decorate,decoration={brace,amplitude=10pt},xshift=0,yshift=0]
(10.5,0.8) -- (14.5,0.8) node [black,midway,yshift=0.6cm] 
{\footnotesize $u$};
\draw [decorate,decoration={brace,amplitude=10pt},xshift=0,yshift=0]
(14.5,0.8) -- (18.5,0.8) node [black,midway,yshift=0.6cm] 
{\footnotesize $u$};
\draw [decorate,decoration={brace,amplitude=10pt},xshift=0,yshift=0]
(18.5,0.8) -- (22.5,0.8) node [black,midway,yshift=0.6cm] 
{\footnotesize $u$};
\draw [decorate,decoration={brace,amplitude=10pt},xshift=0,yshift=0]
(22.5,0.8) -- (26.5,0.8) node [black,midway,yshift=0.6cm] 
{\footnotesize $v$};
\node[circle,fill,inner sep =0.2] (a) at (3,-2) {\footnotesize a};
\node[circle,fill,inner sep =0.2] (b) at (4,-3.5) {\footnotesize a};
\node[circle,fill,inner sep =0.2] (c) at (5,-5) {\footnotesize a};
\node[circle,fill,inner sep =0.2] (d) at (6,-6.5) {\footnotesize a};
\node[circle,fill,inner sep =0.2] (e) at (7,-2) {\footnotesize a};
\node[circle,fill,inner sep =0.2] (f) at (8,-3.5) {\footnotesize a};
\node[circle,fill,inner sep =0.2] (g) at (9,-5) {\footnotesize a};
\node[circle,fill,inner sep =0.2] (h) at (10,-6.5) {\footnotesize a};
\node[circle,fill,inner sep =0.2] (i) at (11,-2) {\footnotesize a};
\node[circle,fill,inner sep =0.2] (j) at (12,-3.5) {\footnotesize a};
\node[circle,fill,inner sep =0.2] (k) at (13,-5) {\footnotesize a};
\node[circle,fill,inner sep =0.2] (l) at (14,-6.5) {\footnotesize a};
\node[circle,fill,inner sep =0.2] (m) at (15,-2) {\footnotesize a};
\node[circle,fill,inner sep =0.2] (n) at (16,-3.5) {\footnotesize a};
\node[circle,fill,inner sep =0.2] (o) at (17,-5) {\footnotesize a};
\node[circle,fill,inner sep =0.2] (p) at (18,-6.5) {\footnotesize a};
\node[circle,fill,inner sep =0.2] (q) at (19,-2) {\footnotesize a};
\node[circle,fill,inner sep =0.2] (r) at (20,-3.5) {\footnotesize a};
\node[circle,fill,inner sep =0.2] (s) at (21,-5) {\footnotesize a};
\node[circle,fill,inner sep =0.2] (t) at (22,-6.5) {\footnotesize a};
\node[circle,fill,inner sep =0.2] (u) at (23,-2) {\footnotesize a};
\node[circle,fill,inner sep =0.2] (v) at (24,-3.5) {\footnotesize a};
\node[circle,fill,inner sep =0.2] (w) at (25,-5) {\footnotesize a};
\node[circle,fill,inner sep =0.2] (x) at (26,-6.5) {\footnotesize a};
\draw[->] (a) to[bend left=8] (e);
\draw[->] (b) to[bend left=8] (f);
\draw[<-] (c) to[bend right=8] (g);
\draw[<-] (d) to[bend right=8] (h);
\draw[->] (e) to[bend left=8] (i);
\draw[->] (f) to[bend left=8] (j);
\draw[<-] (g) to[bend right=8] (k);
\draw[<-] (h) to[bend right=8] (l);
\draw[->] (i) to[bend left=8] (m);
\draw[->] (j) to[bend left=8] (n);
\draw[<-] (k) to[bend right=8] (o);
\draw[<-] (l) to[bend right=8] (p);
\draw[->] (m) to[bend left=8] (q);
\draw[->] (n) to[bend left=8] (r);
\draw[<-] (o) to[bend right=8] (s);
\draw[<-] (p) to[bend right=8] (t);
\draw[->] (q) to[bend left=8] (u);
\draw[->] (r) to[bend left=8] (v);
\draw[<-] (s) to[bend right=8] (w);
\draw[<-] (t) to[bend right=8] (x);
\draw[->] (c) to[bend left=40,looseness=1.5] (a);
\draw[->] (d) to[bend left=40,looseness=1.5] (b);
\draw[->] (u) to[bend left=40,looseness=1.5] (w);
\draw[->] (v) to[bend left=40,looseness=1.5] (x);
\draw[] (e) -- node[right] {$a$} (f);
\draw[] (u) -- node[right] {$a$} (v);
\end{tikzpicture}
\end{center}
\caption{The nodes and connections described in the proof of Theorem~\ref{thm:LamplighterEmbedding}, in the case $A = \Z_2, G = \Z$. The four nodes under each $w \in \{u, v\}$ are drawn with vertical offsets for a clearer drawing. The $t$-edges are unlabeled, and self-loops and nodes with only self-loops are omitted.}
\label{fig:LamplighterConveyors}
\end{figure}

Clearly $a^2$ acts as the identity. To see that $[a, a^{t^i}]$ acts as identity, observe that the nodes are paired up in a natural way, with, under each $w \in \{u, v\}$, the first and second position forming a pair, and the third and fourth forming a pair. The action of $t$ is to move the head around a finite conveyor belt, or along an infinite path, and a pair never separates and keeps the same relative offset. The action of $a$ exchanges a pair if the current block is of type $v$. The action of $a^{t^i}$ then always returns the head either back to the original node or to its pair, and this clearly implies $[a, a^{t^i}] = \ID$, so we have an action of the lamplighter group. The action on configurations with one infinite conveyor belt and a single occurrence of $v$ is easily seen to prove faithfulness.

The general idea is now very similar to the case of wreath products in \cite{Sa20}, we only outline it here. The essential feature of $A$ is abelianity: we can replace the pairs with $|A|$-tuples, and have $A$ act by its regular action when the head is in a marked block ($v$-blocks are marked, and $u$-blocks are not), and trivially otherwise. We call the position of the head inside an $|A|$-tuple its \emph{$A$-state}.

Now note that commutation of the conjugates $a^{t^i}$ does not actually care what the actions of the elements $t^i$ is, as long as the relative positions of cells in a pair stay the same. Thus, we can replace the action of $\langle t \rangle$ by any action $G \leq \llb \Sigma^\Z \rrb$, and we will get a well-defined action of $A \wr G$. For this we simply need to add some symbol information (an element of $\Sigma^2$) in each of the blocks $w_0, w_1$ (in addition to, not replacing, the bit in the subindex) and wrap them to a conveyor belt as in the proof of Theorem~\ref{thm:GraphProducts}.

For faithfulness of this action, we used the unique moves property of the shift action of $\Z$. It is indeed enough to have an action which is move-$A$ithful. To see this, replace the subscripts in $w_0$ and $w_1$ with arbitrary endomorphisms of $A$ (in addition to the added symbol information described in the previous paragraph), and when the head is in in a cell carrying an endomorphism $\phi : A \to A$, the action of $\hat h$ for $h \in A$ sums $\phi(h)$ to the $A$-state. The definition of move-$A$ithfulness precisely states that on free configurations we can set up the endomorphisms so that an element where the total $G$-action is trivial will act on the state non-trivially. Of course a non-trivial $G$ action is visible in the movement of the head (by the assumption that the original $G$ action is even faithful), so we indeed have faithfulness of the $A \wr G$-action.
\end{proof}

One could generalize the theorem by allowing actions without move-$A$ithfulness, understanding that the wreath product will be with respect to some non-free action of the group $G$, i.e.\ we get some group $A \wr_\Omega G$ where $G \curvearrowright \Omega$.

As shown in the proof, $\Z$ even acts with unique moves, and finite groups are also easily seen to act with unique moves (permute the positions under an unbordered word). It is easy to show that if countably many groups $(G_i)_i$ act with unique moves, then their free product does too, by the construction in Theorem~\ref{thm:GraphProducts}. We obtain the following theorem.

\begin{theorem}
Let $G$ be any countable free product of finite groups and copies of $\Z$. Then $A \wr G \in \mathcal{G}$ for any finite abelian group $A$.
\end{theorem}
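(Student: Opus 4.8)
The plan is to reduce the statement to the two ingredients already isolated in the surrounding text: that $G$ acts with \emph{unique moves}, and the implication ``unique moves $\Rightarrow$ move-$A$ithful''. Once $G$ acts move-$A$ithfully, Theorem~\ref{thm:LamplighterEmbedding} delivers $A \wr G \leq \llb \Sigma^\Z \rrb$ for every nontrivial finite abelian $A$; the trivial case $A \wr G = G$ is covered directly by Theorem~\ref{thm:GraphProducts}, since $G$, being a free product of finite groups and copies of $\Z$, is exactly a graph product over the edgeless graph of groups lying in $\mathcal{G}_{\mathrm{plin}}$ (finite groups embed with bounded look-ahead by permuting cells under a fixed unbordered word, and $\Z$ embeds via the shift, whose cocycle is constant, hence again bounded). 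Thus the whole theorem follows as soon as $G$ is shown to act with unique moves, and I would concentrate the effort there.

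First I would verify unique moves for the factors. For a finite group $H$, realize $H$ inside $\llb \Sigma^\Z \rrb$ by labelling the $|H|$ cells under each occurrence of a fixed unbordered word by the elements of $H$ and letting $H$ act through its left regular representation. At any configuration $x$ with the head in the cell labelled $k$, the element $h$ sends the head to the cell labelled $hk$, so $c_h(x)$ equals the offset between those two cells; since $hk = h'k \iff h = h'$, distinct elements produce distinct cocycle values at $x$, which is exactly unique moves (indeed at every such $x$). For $\Z$ acting by the shift, $c_{t^n} \equiv n$, so distinct elements again have distinct constant cocycles and unique moves is immediate.

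The heart of the argument is that unique moves is preserved under countable free products, using the conveyor-belt construction of Theorem~\ref{thm:GraphProducts} specialized to the edgeless graph (so that no two factors commute and adjacent blocks of different types are glued at shared cells). Given $\emptyset \neq F \Subset G$, I would pick a nontrivial $g \in F$ with reduced form $g = g_{u_m} \cdots g_{u_1}$ and build a configuration $x$ whose blocks chain together, via shared cells, a sequence of belts of types $u_1, u_2, \dots, u_m$, installing inside the type-$u_j$ belt the finite sub-pattern that witnesses the unique move of $g_{u_j}$ in $G_{u_j}$. By construction, applying $\hat g_{u_m} \cdots \hat g_{u_1}$ threads the head from the origin, through these belts in order, to a designated terminal cell. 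The claim to establish is that $g$ is the \emph{only} element of $G$ sending the origin to that terminal cell at $x$; equivalently, since $c_h(x)$ is just the signed distance from the origin to the cell $h$ moves it to, that $h \mapsto c_h(x)$ separates $g$ from every element of $G$ (not merely from $F$). I expect this to be the main obstacle. To pin it down I would arrange the trajectory of $g$ to be a dead end: cap the construction with infinite belts at the two outer ends (as in the faithfulness configurations used elsewhere in the paper) so that the head can neither proceed past the terminal cell nor take a shortcut, and so that the shared-cell geometry only connects consecutive belts of the chain. The unique-move witnesses then force, belt by belt, that any $h$ reaching the terminal cell must use $g_{u_j}$ in the $j$th belt, and reducedness in the free product upgrades ``same sequence of factor moves'' to ``$h = g$''. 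The two delicate points are excluding elements whose reduced words are longer than $g$ (handled by the capping, which makes the terminal cell a genuine dead end) and excluding words that visit the belts out of order (handled by the fact that distinct belts are joined only along the prescribed chain).

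With unique moves in hand, the lemma that unique-move actions are move-$A$ithful gives that $G$ acts move-$A$ithfully for every finite abelian $A$, and Theorem~\ref{thm:LamplighterEmbedding} then yields the embedding $A \wr G \leq \llb \Sigma^\Z \rrb$, that is, $A \wr G \in \mathcal{G}$, completing the proof.
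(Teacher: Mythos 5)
Your overall architecture is exactly the paper's: the theorem is obtained by showing the factor groups act with unique moves (finite groups by permuting the cells under an unbordered word, $\Z$ with constant cocycles), showing that the free-product construction of Theorem~\ref{thm:GraphProducts} preserves unique moves, and then chaining the lemma (unique moves $\Rightarrow$ move-$A$ithful) with Theorem~\ref{thm:LamplighterEmbedding}. However, in the one step where real work is needed you commit to the literal, strong reading of the definition --- that $c_h(x) \neq c_g(x)$ for \emph{every} $h \in G \setminus \{g\}$ --- and your proposed fix (capping the chain of belts with infinite belts so the terminal cell is a dead end) cannot deliver this. The obstruction is not shortcuts or longer words threading past the terminal cell, but stabilizers: in the conveyor-belt construction a node carries at most two colors, so at any configuration $x$ the origin lies in belts of at most two types, and every nontrivial $s$ in a factor of any third type satisfies $\hat s(x) = x$, i.e.\ $c_s(x) = 0$. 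Then $c_{gs}(x) = c_g(\hat s(x)) + c_s(x) = c_g(x)$ while $gs \neq g$ in the free product, so $g$'s cocycle value is never unique in $G$ once there are at least three factors (and even with two factors, conjugates $s^{-1}g's$ that park the head at a non-shared cell of the other belt type produce nontrivial elements with zero cocycle at $x$, giving the same failure). No geometric dead-end removes these cosets of the stabilizer, so the ``main obstacle'' you flag is, as you set it up, genuinely insurmountable.

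The way out --- and what the paper actually uses --- is that the strong form is never needed: the proof of the lemma converting unique moves to move-$A$ithfulness only evaluates $\gamma$ against the finitely many cocycle values of the support $F$ of $\beta$, so it suffices that $c_h(x) \neq c_g(x)$ for $h \in F \setminus \{g\}$ (this matches the introduction's phrasing ``different from the values of cocycles of all other elements in the set''; the formal definition's ``$h \in G \setminus \{g\}$'' is stronger than what is used, and, as just argued, stronger than what the belt construction can achieve). With the $F$-relative version, your chained-witness configuration is precisely the intended argument: for a suitable $g \in F$ you thread the unique-move witnesses of the syllables of $g$ through shared cells as in the faithfulness part of Theorem~\ref{thm:GraphProducts}, and you only have to separate $g$ from the finitely many other elements of $F$, which the belt-by-belt forcing you describe does handle. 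Your remaining reductions (the trivial-$A$ case via Theorem~\ref{thm:GraphProducts}, unique moves for finite groups via the regular representation, and for $\Z$ via the shift) are correct and agree with the paper. So the proposal is the right proof with one repairable but real defect: replace ``separates $g$ from every element of $G$'' by ``separates $g$ within $F$'' and drop the capping argument, which does not do the job it is assigned.
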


We have a simple example for a group that embeds in $\llb \Sigma^\Z \rrb$ (with linear look-ahead) but does not admit any move-$A$ithful actions:

\begin{theorem}
The group $\Z^2$ does not embed in $\llb \Sigma^\Z \rrb$ with a move-$A$ithful action, for any alphabet $\Sigma$ and nontrivial finite abelian group $A$.
\end{theorem}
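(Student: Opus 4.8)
The first move is to restate the failure of move-$A$ithfulness as a purely combinatorial condition on cocycles. Since $\gamma$ ranges over all functions $\Z \to \End(A)$, grouping the sum according to the value $k = c_g(x)$ gives $\sum_{g}\gamma(c_g(x))(\beta(g)) = \sum_k \gamma(k)\bigl(\sum_{g : c_g(x) = k}\beta(g)\bigr)$, and this vanishes for every $\gamma$ precisely when each inner sum vanishes (take $\gamma$ equal to the identity at a single value and $0$ elsewhere, and conversely). Hence $\Z^2$ fails to be move-$A$ithful as soon as we produce a single nonzero, finitely supported $\beta : \Z^2 \to A$ such that for every $x \in \Sigma^\Z$ and every $k \in \Z$ we have $\sum_{g : c_g(x) = k}\beta(g) = 0_A$; in words, $\beta$ must be balanced on every fibre of the map $g \mapsto c_g(x)$, for every configuration $x$ at once. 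So the whole theorem reduces to exhibiting such a \emph{universally fibre-balanced} weighting, using only general properties of cocycles of a $\Z^2 = \langle s, t\rangle \leq \llb \Sigma^\Z \rrb$. Note that a support of size two would require two distinct group elements with identical cocycles, which faithfulness forbids, so the support must genuinely be large.

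The heart of the matter is a growth mismatch. Writing $c_s, c_t$ for the cocycles of the generators, which are bounded by some $M$, the cocycle identity forces $c_{s^i t^j}$ to take values in an interval of length $O(M(|i|+|j|))$. Thus over the box $B_N = \{s^i t^j : 0 \leq i,j \leq N\}$ of $(N+1)^2$ group elements, for each fixed $x$ the map $g \mapsto c_g(x)$ takes only $O(MN)$ values, so its fibres are large on average and a nonzero $\beta_x$ balanced on the fibres of that one $x$ exists by linear algebra over $A$ once $N$ is large. Moreover each $c_g$ is locally constant, so the entire fibre partition of $B_N$ is determined by the restriction of $x$ to a finite window; consequently ``balanced on every fibre for every $x$'' is a \emph{finite} homogeneous linear system over $A$ in the $(N+1)^2$ unknowns $\beta(g)$, with one equation per pair (window pattern, cocycle value). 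A nonzero solution — the $\beta$ we want — exists provided the number of genuinely distinct equations stays below $(N+1)^2$ for some $N$.

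The main obstacle is exactly this count: the window determining $c_{s^i t^j}$ grows linearly with $i+j$, so a priori the number of window patterns, hence of equations, grows exponentially in $N$ and swamps the $(N+1)^2$ unknowns. To beat this I would not count patterns directly but exploit the one-dimensionality of the cocycle's growth. For a shift-invariant measure $\mu$, integration kills coboundaries and the cocycle identity makes $I_\mu : g \mapsto \int c_g \, d\mu$ a genuine homomorphism $\Z^2 \to \R$, so $I_\mu(s^i t^j) = i\alpha + j\beta_0$ is \emph{linear} in $(i,j)$: the means of the cocycles lie on a single line, which is the precise sense in which the two-dimensional family $\{s^i t^j\}$ is squeezed into a one-dimensional range of displacements. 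I would use this to restrict the support of $\beta$ to a region transverse to the linear trend (elements with $i\alpha + j\beta_0$ confined to a bounded interval), so that the relevant cocycle values, and with them the effective number of equations, stay linear in $N$ while the number of available unknowns stays quadratic; the surplus of unknowns then yields the desired nonzero universally fibre-balanced $\beta$. Controlling the number of independent equations uniformly in $x$ is the delicate point, and is where the special abelian, polynomial-growth structure of $\Z^2$ is essential — by contrast, free products of finite and cyclic groups have enough room to realise unique moves, so no such balanced weighting exists for them.
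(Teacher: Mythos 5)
Your opening reduction is correct and even clarifying: failure of move-$A$ithfulness is exactly the existence of a nonzero, finitely supported $\beta$ that is balanced on every fibre of $g \mapsto c_g(x)$, for every $x$ at once, and your growth-mismatch observation (quadratically many elements of $\Z^2$ in a box versus $O(MN)$ cocycle values per configuration) is precisely the paper's starting point. But the heart of the theorem --- producing one $\beta$ that works for \emph{all} $x$ simultaneously --- is left open, and the route you sketch for it does not close the gap. Your homogeneous system has one equation per pair (window pattern, cocycle value), and the number of window patterns is exponential in $N$; nothing you propose bounds the \emph{rank} of this system below $(N+1)^2$. The invariant-measure idea cannot prune it, because $I_\mu(g)$ controls only the average of $c_g$: for $g$ of word length $N$ the pointwise deviation $|c_g(x) - I_\mu(g)|$ can itself be of order $N$, so elements in a strip where $i\alpha + j\beta_0$ is bounded still realize $O(N)$ distinct cocycle values on a given $x$. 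Restricting the support of $\beta$ to such a strip therefore cuts the unknowns to $O(N)$ without cutting the per-configuration equation count below $O(N)$ --- it destroys the quadratic surplus rather than rescuing it. The sentence in which you flag ``controlling the number of independent equations uniformly in $x$'' as the delicate point is flagging the entire content of the theorem.

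The paper avoids counting equations altogether. From the same pigeonhole (superlinear growth versus linearly bounded cocycles) it extracts a finite set $1 \notin F = \{g_1, \dots, g_k\} \Subset \Z^2$ such that every $x$ is fixed by some $g \in F$, and then, using density of aperiodic points and continuity of cocycles, such that every $x$ satisfies $c_{g_i}(x) = 0$ for some $i$. The universal $\beta$ is then written down explicitly by inclusion--exclusion: $\beta(g) = \sum \{ (-1)^{|I|} \cdot h \;|\; g_I = g \}$ where $g_I = \prod_{i \in I} g_i$ and $h \neq 0_A$; it is nonzero because some $g_I$ (obtained from a half-plane through a generator-direction) has a unique representation. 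Fibre-balancedness at every $x$ is then a short computation rather than a rank estimate: fix $i$ with $c_{g_i}(x) = 0$, so $g_i x = x$; since commutativity gives $g_{I \cup \{i\}} = g_I g_i$ for every $I \not\ni i$, the cocycle identity yields $c_{g_{I \cup \{i\}}}(x) = c_{g_I}(g_i x) + c_{g_i}(x) = c_{g_I}(x)$, so the terms indexed by $I$ and $I \cup \{i\}$ lie in the same fibre with opposite signs and cancel in pairs. This pointwise pairing is what replaces your linear-algebra search, and it is also where abelianness enters irreplaceably --- a step your sketch never reaches. If you revise, keep your fibre-balancing reformulation (it phrases the paper's cancellation cleanly, since cancelling within fibres is exactly balancedness) and replace the dimension count with this explicit construction.
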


\begin{proof}
Write $G = \Z^2$ multiplicatively and $A$ additively. Let $h \in A \setminus \{0_H\}$. Note that $G$ has superlinear growth, so there exists a finite set $1 \notin F \Subset G$ such that $\forall x \in \Sigma^\Z: \exists g \in F: gx = x$. Since $G$ is abelian, $\mathrm{stab}_G(x) = \mathrm{stab}_G(gx)$ for all $g \in G$. Because aperiodic points are dense in $\Sigma^\Z$ (and the action determines the cocycle values on all aperiodic $\sigma$-orbits), for all $x \in \Sigma^\Z$ we have $c_g(x) = 0$ for some $g \in F$.

Let $F = \{g_1, g_2, ..., g_k\}$ and for $I \subset \{1,2,...,k\}$ define $g_I = \prod_{i \in I} g_i$. Now define $\beta : G \to A$ by
\[ \beta(g) = \sum \{ (-1)^{|I|} \cdot h \;|\; g_I = g \} \]
where $(-1)^{|I|} \cdot h$ means we take $h$ or $-h$ depending on the parity of $|I|$. Note that $\beta(g) = 0$ if $g \neq g_I$ for all $I$, so this has finite support. Note also that $\beta$ is not the zero map, since taking any $g_i$ and an open half-plane around it, the product of all elements $g_j$ on that half-plane has a unique representation as $g_I$.

Now, for any $x \in \Sigma^\Z$ and $\gamma : \Z \to \End(A)$ if $c_{g_i}(x) = 0$ then
\begin{align*}
\sum_{g \in G} \gamma(c_g(x))(\beta(g)) &= \sum_{I \subset \{1,2,...,k\}} \gamma(c_{g_I}(x))((-1)^{|I|} h) \\
&= \sum_{I \subset \{1,2,...,k\}, i \notin I} (\gamma(c_{g_I}(x))((-1)^{|I|} \cdot h) + \gamma(c_{g_{I \cup \{i\}}}(x))((-1)^{|I| + 1} \cdot h))
\end{align*}
As observed above, by the abelianity of $G$, $c_{g_i}(x) = 0$ implies $c_{g_i}(g(x)) = 0$ for any $g \in G$, and thus for $i \notin I$ we have $c_{g_I}(x) = c_{g_{I \cup \{i\} } }(x)$, thus
\[ \gamma(c_{g_I}(x))((-1)^{|I|} \cdot h) = -\gamma(c_{g_{|I| \cup \{i\}}}(x))((-1)^{|I|+1} \cdot h), \]
and the sum cancels to zero. This contradicts move-$A$ithfulness
\end{proof}

Of course it follows that an abelian group $G$ admits a move-$A$ithful action for some non-trivial $A$ if and only if it admits an action with unique moves, if and only if it is virtually cyclic. The previous theorem shows that the construction Theorem~\ref{thm:LamplighterEmbedding} cannot be applied to the group $\Z_2 \wr \Z^2$. It seems non-trivial to prove that no other construction will work, although we believe this is the case.

\begin{conjecture}
Let $A$ be a nontrivial finite abelian group, $\Sigma$ an alphabet. Then $A \wr \Z^d \not\leq \llb \Sigma^\Z \rrb$ if $d \geq 2$.
\end{conjecture}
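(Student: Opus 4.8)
The plan is to turn the growth obstruction behind the nonexistence of move-$A$ithful actions of $\Z^2$ into a direct non-embedding statement. Suppose for contradiction that $\phi : A \wr \Z^d \hookrightarrow \llb \Sigma^\Z \rrb$ with $d \geq 2$. Write $t_1, \dots, t_d$ for the images of the standard generators of the top $\Z^d$, write $\tau^w = \phi(t^w)$ for $w \in \Z^d$, and let $a_v = \tau^v a_0 \tau^{-v}$ be the images of the lamp generators ($v \in \Z^d$); these are pairwise commuting torsion elements on which $\Z^d$ acts freely by translation of the index $v$, and $\phi$ restricted to the base group $\bigoplus_v A_v$ is faithful. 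The first step, which is essentially the first half of the $\Z^2$ argument, is a covering lemma: if $N$ bounds the radii and cocycles of the generators $t_i$, then $|c_{\tau^w}(x)| \leq N|w|_1$ uniformly, so for fixed $x$ the $\Z^d$-ball $B_R$ maps into the at most $2NR+1$ shifts $\{\sigma^j x : |j| \leq NR\}$; since $|B_R| = \Theta(R^d)$ with $d \geq 2$, pigeonhole produces distinct $w, w' \in B_R$ with $\tau^{w}(x) = \tau^{w'}(x)$, hence a nontrivial $\tau^{w-w'}$ fixing $x$. Taking $R$ once and for all, we obtain a finite $F \subset \Z^d \setminus \{0\}$ with $X = \bigcup_{w \in F} \mathrm{Fix}(\tau^w)$, where each $\mathrm{Fix}(\tau^w) = \{x : c_{\tau^w}(x)=0\}$ is clopen.

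Next I would try to contradict faithfulness of $\phi$ on the base group using these large stabilizers. Fix $w \in F$ and set $\tau = \tau^w$; the subgroup $\langle a_{kw} : k \in \Z\rangle \rtimes \langle \tau\rangle$ is a copy of $A \wr \Z$ with $\tau$ as the lamplighter shift, and $\tau$ acts as the identity on the clopen set $C = \mathrm{Fix}(\tau)$. The idea is to exploit this: for $x \in C$ one has $a_{(k+1)w}(x) = \tau a_{kw}(x)$, so whenever the lamp action keeps points inside $C$ (i.e.\ $a_{kw}(x) \in C$) the consecutive lamps $a_{kw}$ and $a_{(k+1)w}$ agree on $x$, forcing a nontrivial base element such as $a_0 a_w^{-1}$ to act trivially on a nonempty clopen set. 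To upgrade a relation valid only on a clopen set to one valid everywhere (contradicting faithfulness) I would bring in the homomorphism $\rho_\mu : \llb \Sigma^\Z \rrb \to \R$, $\rho_\mu(f) = \int c_f\,d\mu$, where $\mu$ is the uniform Bernoulli measure: $\mu$ is $\llb \Sigma^\Z \rrb$-invariant (each $f$ is a disjoint union of shifts, so $\mu(fE)=\mu(E)$), which makes $\rho_\mu$ a homomorphism killing all torsion. Since $w \mapsto \rho_\mu(\tau^w) = \langle\lambda, w\rangle$ is linear, varying $\mu$ should let one control which sets $\mathrm{Fix}(\tau^w)$ are $\mu$-large and, combined with the covering, localize the failure of lamp independence to a single direction, where the essentially one-dimensional structure of $\llb \Sigma^\Z \rrb$ clashes with the free $\Z^d$-action on the lamps.

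The main obstacle is precisely the conditional clause above: for an arbitrary embedding there is no control over whether the lamp $a_0$ preserves the clopen set $C = \mathrm{Fix}(\tau)$, equivalently over how the lamps shuffle points between the finitely many fixed sets $\mathrm{Fix}(\tau^{w'})$, $w' \in F$. In the conveyor-belt construction of Theorem~\ref{thm:LamplighterEmbedding} the lamps act locally and this interaction is transparent, which is exactly why $d=1$ succeeds; a general embedding of $\bigoplus_v A_v$ carries no such locality, so the relations forced on the lamps remain confined to proper clopen subsets and need not contradict global faithfulness. Overcoming this seems to require either a normal-form or structure theorem for bi-infinite families of commuting torsion elements of $\llb \Sigma^\Z \rrb$ that are permuted by a bounded-cocycle $\Z^d$-action, or a finer conjugacy invariant than $\rho_\mu$ that already detects the dimension $d$. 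The intuition that a $d$-dimensional lamp array cannot be laid out on the line $\Z$ with only the bounded head-movements available to $t_1,\dots,t_d$ is compelling but, lacking such a tool, resists a direct proof, which is why the statement is left as a conjecture.
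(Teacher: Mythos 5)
The statement you are trying to prove is stated in the paper as a \emph{conjecture}: the paper offers no proof of it, and explicitly remarks that ``it seems non-trivial to prove that no other construction will work.'' What the paper actually proves is the strictly weaker theorem that $\Z^2$ admits no move-$A$ithful action in $\llb \Sigma^\Z \rrb$, which only rules out embeddings of $A \wr \Z^d$ obtained via the conveyor-belt scheme of Theorem~\ref{thm:LamplighterEmbedding}, not arbitrary embeddings. Your first step is essentially the first half of that theorem's proof: the superlinear growth of $\Z^d$ plus the uniform cocycle bound $|c_{\tau^w}(x)| \leq N|w|_1$ yields, by pigeonhole, a finite set $F \Subset \Z^d \setminus \{0\}$ such that every $x$ satisfies $c_{\tau^w}(x) = 0$ for some $w \in F$ (as in the paper, one should pass through density of aperiodic points to upgrade ``$\tau^w$ fixes $x$'' to ``the cocycle vanishes at $x$''). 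Up to that point you are faithfully reproducing the paper's argument.

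The gap is in the second step, and you have diagnosed it correctly yourself: a relation among lamps forced only on a clopen set does not contradict faithfulness, since an element of $\llb \Sigma^\Z \rrb$ can act trivially on a nonempty clopen set and nontrivially elsewhere. Two further points make the obstacle concrete. First, even on $C = \mathrm{Fix}(\tau)$, the identity $a_{(k+1)w}(x) = \tau(a_{kw}(x))$ gives agreement of consecutive lamps only when $a_{kw}(x)$ itself lies in $C$; nothing in an abstract embedding constrains how the lamps shuffle points among the sets $\mathrm{Fix}(\tau^{w'})$, $w' \in F$, so the induction never gets started. Second, the proposed repair via $\rho_\mu(f) = \int c_f \, d\mu$ cannot work as stated: $\rho_\mu$ is indeed a homomorphism to $\R$ (elements of the topological full group preserve every shift-invariant measure), but precisely for that reason it kills all torsion, hence vanishes identically on the base group $\bigoplus_v A_v$ where the relations you need to detect live; it sees only the $\Z^d$ quotient. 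The paper's move-$A$ithfulness framework sidesteps exactly this by building the interaction between lamps and head movement into the construction (the endomorphism-labelled cells), which is why the paper can refute that construction for $d \geq 2$ but leaves the full non-embedding open. Your concluding assessment --- that a structure theorem for commuting torsion families permuted by a bounded-cocycle $\Z^d$-action, or a finer invariant than $\rho_\mu$, would be needed --- is consistent with the paper's stance; but the proposal is not a proof, and no proof exists in the paper.
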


\bibliographystyle{plain}
\bibliography{bib}{}

\begin{thebibliography}{10}

\bibitem{BaKaSa16}
Sebasti{\'a}n Barbieri, Jarkko Kari, and Ville Salo.
\newblock {\em The Group of Reversible Turing Machines}, pages 49--62.
\newblock Springer International Publishing, Cham, 2016.

\bibitem{BeBlMa20}
James Belk, Collin Bleak, and Francesco Matucci.
\newblock Embedding right-angled {A}rtin groups into {B}rin-{T}hompson groups.
\newblock {\em Math. Proc. Cambridge Philos. Soc.}, 169(2):225--229, 2020.

\bibitem{Br04}
Matthew~G. Brin.
\newblock Higher dimensional {T}hompson groups.
\newblock {\em Geom. Dedicata}, 108:163--192, 2004.

\bibitem{CoHa16}
Nathan Corwin and Kathryn Haymaker.
\newblock The graph structure of graph groups that are subgroups of
  {T}hompson's group {$V$}.
\newblock {\em Internat. J. Algebra Comput.}, 26(8):1497--1501, 2016.

\bibitem{JuMo13}
Kate Juschenko and Nicolas Monod.
\newblock Cantor systems, piecewise translations and simple amenable groups.
\newblock {\em Annals of Mathematics}, 178:775--787, 2013.

\bibitem{Ka18}
Motoko Kato.
\newblock Embeddings of right-angled {A}rtin groups into higher-dimensional
  {T}hompson groups.
\newblock {\em J. Algebra Appl.}, 17(8):1850159, 5, 2018.

\bibitem{KiRo90}
K.~H. Kim and F.~W. Roush.
\newblock On the automorphism groups of subshifts.
\newblock {\em Pure Mathematics and Applications}, 1(4):203--230, 1990.

\bibitem{LiMa95}
Douglas Lind and Brian Marcus.
\newblock {\em An introduction to symbolic dynamics and coding}.
\newblock Cambridge University Press, Cambridge, 1995.

\bibitem{Ma12}
Hiroki Matui.
\newblock Topological full groups of one-sided shifts of finite type.
\newblock {\em arXiv preprint arXiv:1210.5800}, 2012.

\bibitem{Sa16a}
Ville Salo.
\newblock A note on subgroups of automorphism groups of full shifts.
\newblock {\em Ergodic Theory and Dynamical Systems}, page 1–13, 2016.

\bibitem{Sa20}
Ville Salo.
\newblock Graph and wreath products of cellular automata, 2020.

\bibitem{SaSc16a}
Ville Salo and Michael Schraudner.
\newblock Automorphism groups of subshifts through group extensions.
\newblock Preprint.

\end{thebibliography}

\end{document}